\def \P{{\sf I\kern-1.5ptP}}
\newtheorem{proposition}{Proposition}
\newtheorem{theorem}[proposition]{Theorem}
\newtheorem{lemma}[proposition]{Lemma}
\newtheorem{corollary}[proposition]{Corollary}
\newtheorem{assumption}[proposition]{Assumption}
\newtheorem*{assumption*}{Assumption}
\newcommand{\vertiii}[1]{{\left\vert\kern-0.25ex\left\vert\kern-0.25ex\left\vert #1
		\right\vert\kern-0.25ex\right\vert\kern-0.25ex\right\vert}}
\newcommand{\R}{\mathbb{R}}
\newcommand{\etab}{{\boldsymbol{\eta}}}
\newcommand{\eb}{{\bf e}}
\newcommand{\gb}{{\bf g}}
\newcommand{\hb}{{\bf h}}
\newcommand{\fb}{{\bf f}}
\newcommand{\xb}{{\bf x}}
\newcommand{\zb}{{\bf z}}
\newcommand{\rb}{{\bf r}}
\newcommand{\vb}{{\bf v}}
\newcommand{\ub}{{\bf u}}
\newcommand{\X}{{\cal X}}
\newcommand{\Y}{{\cal Y}}
\newcommand{\BE}{\begin{equation}}
\newcommand{\EE}{\end{equation}}
\renewcommand{\S}{\mathbf{S}}
\date{}
\begin{document}
\title{Generalized Structure Preserving Preconditioners for Frame-Based Image Deblurring\footnote{This is a preprint.}}
\author{Davide Bianchi\footnote{Member of INdAM-GNCS Gruppo Nazionale per il Calcolo Scientifico.}\\
	Dipartimento di Scienza e Alta Tecnologia,
	\\		Universit\`a dell'Insubria, \\
	email: d.bianchi9@uninsubria.it
	\and
%	Marco Donatelli\footnote{Partially supported by INdAM-GNCS Gruppo Nazionale per il Calcolo Scientifico and by MIUR-PRIN 2012 (grant 2012MTE38N).}
%	\\ Dipartimento di Scienza e Alta Tecnologia\\
%	Universit\`a dell'Insubria, Italy.
%	\\ email: marco.donatelli@uninsubria.it
%	\and
%	Ya-Ru Fan\\
%	University of Electronic Science and Technology of China.\\
%	email: yarufanfan@163.com
%	\and
	Alessandro Buccini\footnote{Member of INdAM-GNCS Gruppo Nazionale per il Calcolo Scientifico. Partially founded by the Young Researcher Project ``Reconstruction of sparse data'' of the GNCS group of INdAM and by the AMIS (Algorithms e Models for Imaging Science) project of the Fondazione Sardegna.}\\
	Dipartimento di Matematica e Informatica,\\
	Universit\`a degli Studi di Cagliari, Italy.\\
	email: alessandro.buccini@unica.it
%	\and
%	Ting-Zhu Huang\\
%	University of Electronic Science and Technology of China.\\
%	email: tingzhuhuang@126.com
}

\maketitle
\begin{abstract}
	We are interested in fast and stable iterative regularization methods for image deblurring problems with space invariant blur. The associated coefficient matrix has a Block Toeplitz Toeplitz Blocks (BTTB) like structure plus a small rank correction depending on the boundary conditions imposed on the imaging model. In the literature, several strategies have been proposed in the attempt to define proper preconditioner for iterative regularization methods that involve such linear systems. Usually, the preconditioner is chosen to be a Block Circulant with Circulant Blocks (BCCB) matrix because it can be efficiently exploit Fast Fourier Transform (FFT) for any computation, including the (pseudo-)inversion.
	Nevertheless, for ill-conditioned problems, it is well known that BCCB preconditioners cannot provide a strong clustering of the eigenvalues.
	Moreover, in order to get an effective preconditioner, it is crucial to preserve the structure of the coefficient matrix.
	
	On the other hand, thresholding iterative methods have been recently successfully applied to image deblurring problems, exploiting the sparsity of the image in a proper wavelet domain. 
	Motivated by the results of recent papers, we combine a nonstationary preconditioned iteration with the modified linearized Bregman algorithm (MLBA) and proper regularization operators. 
	
	%We prove that our algorithms are regularizing and convergent. 
	Several numerical experiments shows the performances of our methods in terms of quality of the restorations. 
\end{abstract}

\section*{Introduction}
In image deblurring we are concerned in reconstructing an approximation of an image from blurred
and noisy measurements. This process can be modeled by an integral equation of the form    

\begin{equation} \label{eq:model2}
g(x,y) = (\kappa*f)(x,y)= \int\limits_{-\infty}^{+\infty} \int\limits_{-\infty}^{+\infty} \kappa(x,y,x',y') f(x',y')~dx'~dy' + \eta(x,y),  \qquad(x,y) \in \Omega \subset\R^2,
\end{equation}
where $f : \R^2 \to \R$ is the original image and $g : \R^2 \to \R$ is the observed imaged which is obtained from a combination of a convolution operator, represented by the convolution kernel $\kappa: \R^4 \to \R$, and the add of some (unavoidable) noise $\eta : \R^2 \to \R$ coming from perturbations on the observed data, measurement errors and approximation errors, for example. By assuming the convolution kernel $\kappa$ to be compactly supported and considering the ideal case $\eta = 0$ equation \eqref{eq:model2} becomes
\begin{equation*}
g= K\cdot f,
\end{equation*}
where $K$ is a compact linear operator. In this contest, the convolution kernel $\kappa$ is generally called \textit{point spread function} (PSF) and if it is spatially invariant, as it is the case in many applications, then it assumes the following expression
$$
\kappa(x,y,x',y') = \kappa(x-x',y-y'), \qquad \mbox{with } \kappa: \R^2 \to \R.
$$ 

Considering an uniform grid, images are represented by their color intensities measured on the grid (pixels).  In this paper, for the sake of simplicity, we will deal only with square and gray-scale images, even if all the techniques presented here carry over to images of different sizes and colors as well. 

Collected images are available only in a finite region, the field of view (FOV),
and the measured intensities near the boundary are affected by
data which lie outside the FOV.
\begin{figure}
	\begin{center}
		\includegraphics[width=5cm]{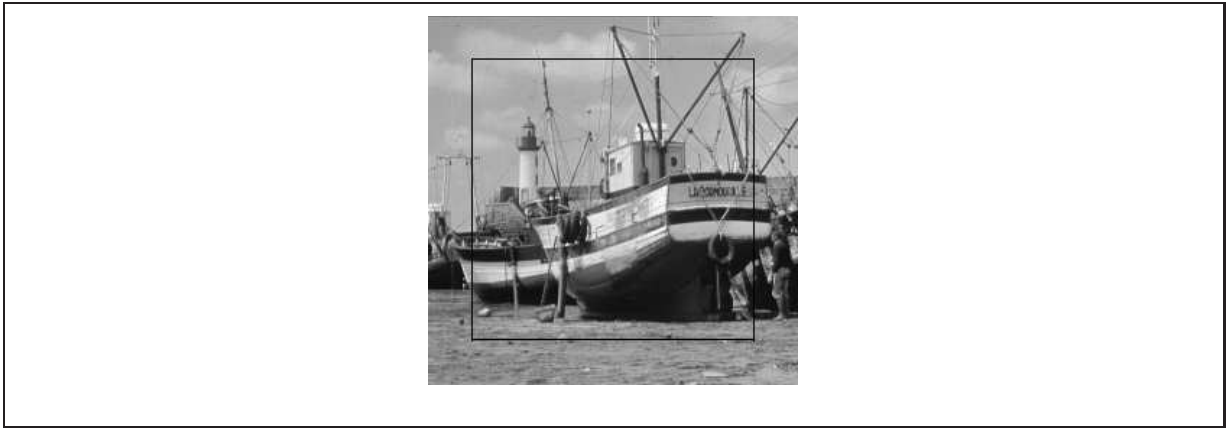}
	\end{center}
	\caption[FOV]{Field of view. We see what is inside the square box.}
\end{figure}
\\Denoting by $\gb$ and $\fb$ the stack ordered vectors corresponding to the observed image and the true image, respectively, the discretization of \eqref{eq:model2} by a rectangular quadrature rule with uniform grid (for example) leads to the under-determined linear system
\begin{equation} \label{eq:modeld}
\gb = K \fb + \etab,
\end{equation}
where the matrix $K$ is of size $m^2 \times k^2$. The matrix $K$ is often called the \textit{blurring matrix}. When imposing proper Boundary Conditions (BCs), the matrix $K$ becomes square $m^2 \times m^2$ and in some cases, depending on the BCs and the symmetry of the PSF, it can be diagonalized by discrete trigonometric transforms. Indeed, specific BCs induce specific matrix structures that can be exploited to lessen the computational costs using fast algorithms. Of course, since BCs are artificially introduced, their advantages could come with drawbacks in terms of reconstruction accuracy, depending on the type of problem.      
The BCs approach forces a functional dependency between the
elements of $\fb$ external to the FOV and those internal to this
area. 
If the BC model is not a good approximation of the real world outside the FOV,
the reconstructed image can be severely affected by some unwanted artifacts near the
boundary, called ringing effects; see, e.g., \cite{HNO05}.

\begin{figure}[!h]
	\begin{center}
		\begin{minipage}[c]{4.5cm}
			\centering
			\includegraphics[width=4cm]{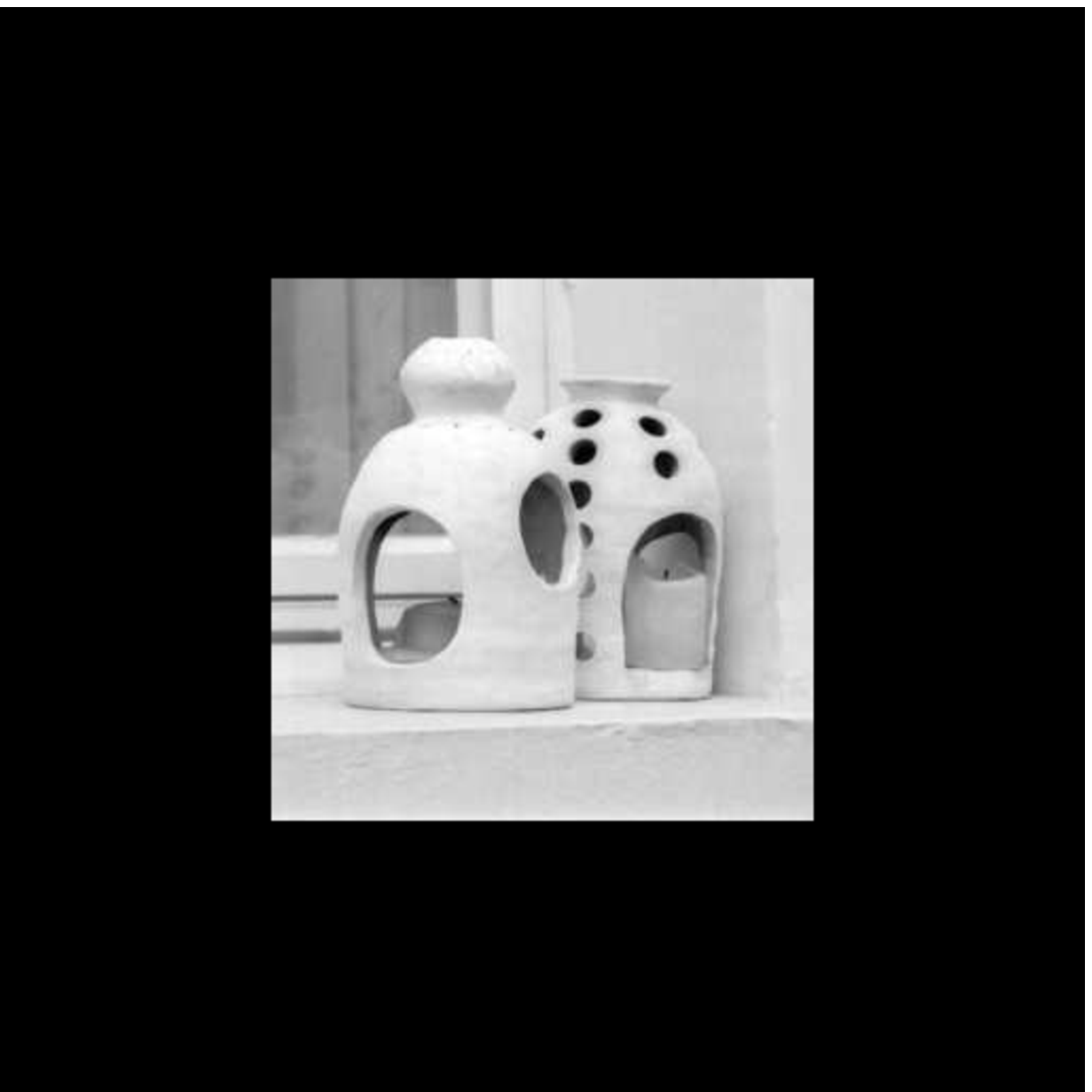}
			\small{zero Dirichlet}
		\end{minipage}\hspace{1cm}
		\begin{minipage}[c]{4.5cm}
			\centering
			\includegraphics[width=4cm]{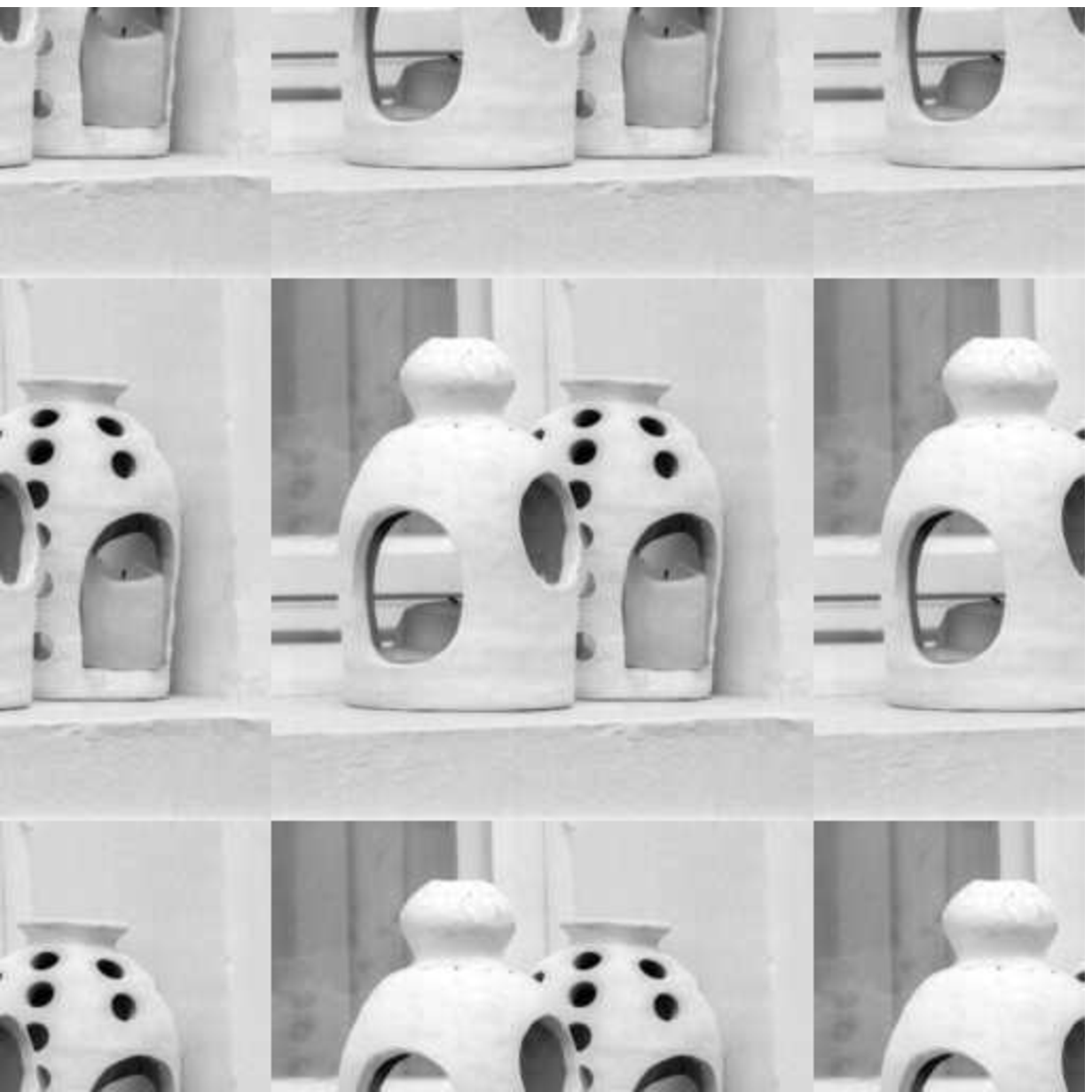}
			\small{Periodic}
		\end{minipage}
	\end{center}
	\vspace{0.1cm}
	\begin{center}
		\begin{minipage}[c]{4.5cm}
			\centering
			\includegraphics[width=4cm]{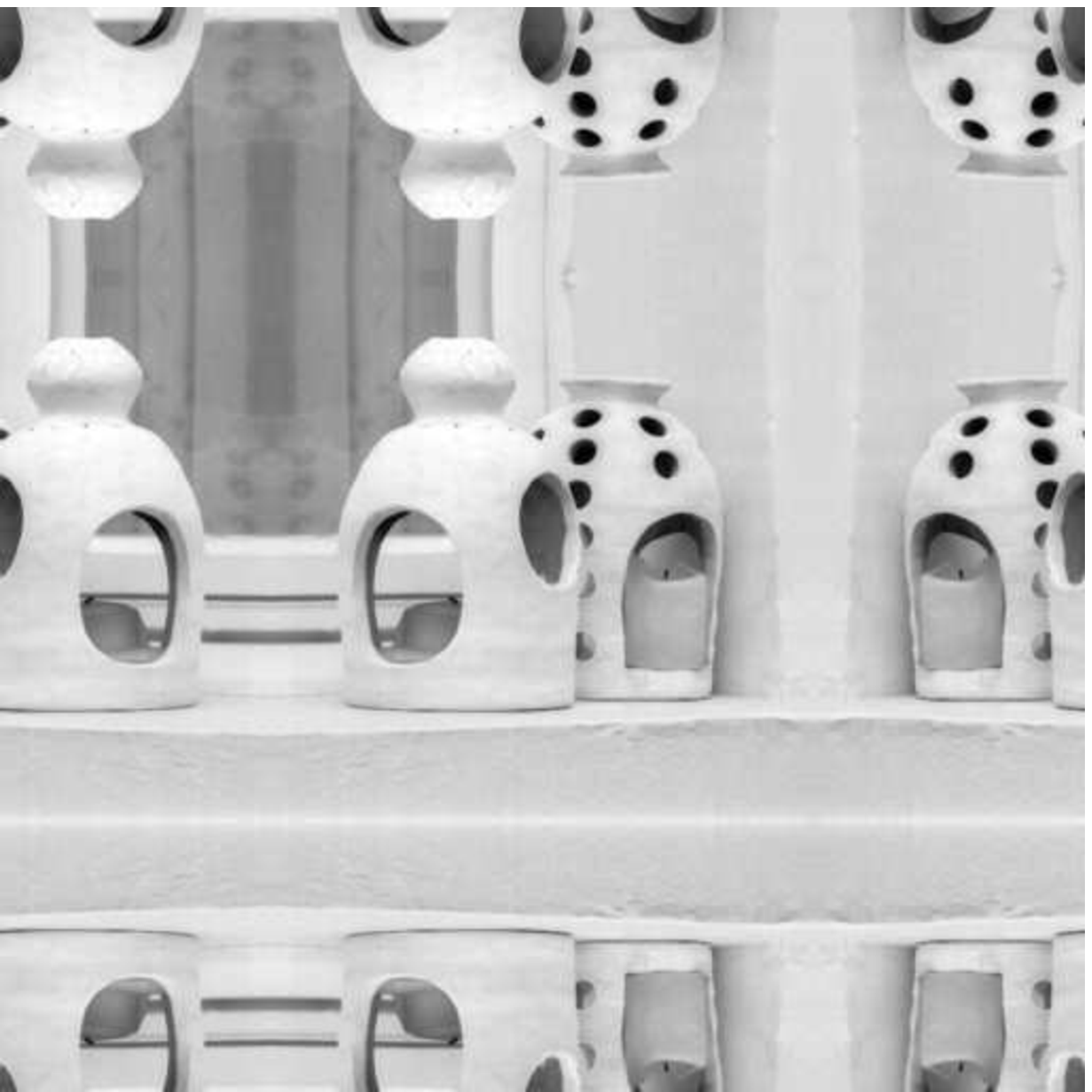}
			\small{Reflective}
		\end{minipage}\hspace{1cm}
		\begin{minipage}[c]{4.5cm}
			\centering
			\includegraphics[width=4cm]{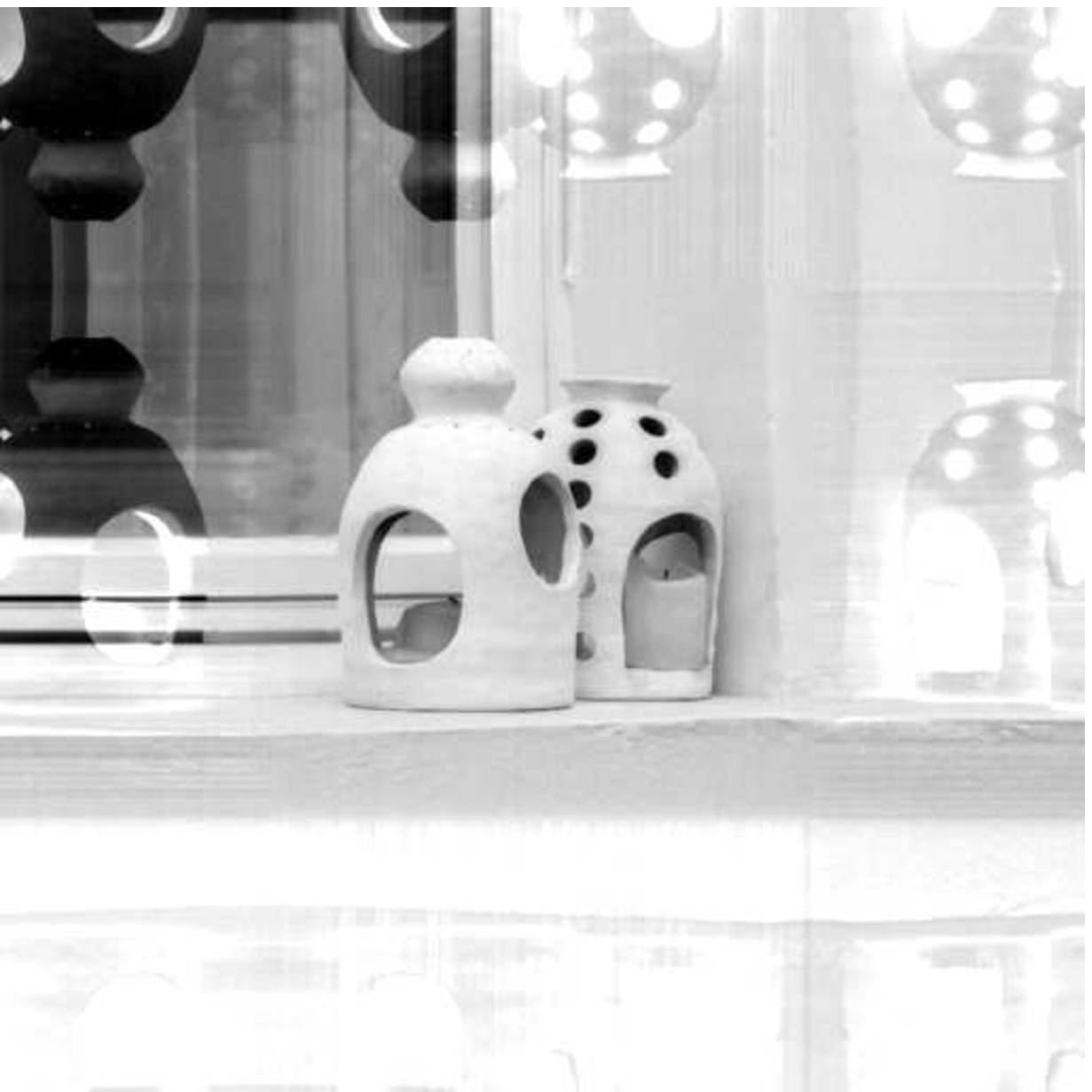}
			\small{Antireflective}
		\end{minipage}
		\caption[Boundary conditions]{Examples of boundary conditions.}\label{boundary_conditions}
	\end{center}
\end{figure}

The choice of the different BCs can be driven by some additional knowledge on the true image
and/or from the availability of fast transforms to diagonalize the matrix $K$ within $O(m^2\log(m))$ arithmetic operations.
Indeed, the matrix-vector product can be always computed by the 2D FFT, after a proper padding of the image to convolve, (see, e.g.,  \cite{RestoreTools}), while the availability of fast transforms to diagonalize the matrix $K$ depends on the BCs. Among the BCs present in the literature, we consider the following ones, but our approach can be extended to other BCs like, e.g., synthetic BCs \cite{AF13} or high order BCs \cite{D17,D}.
\begin{itemize}
	\item Zero (Dirichlet): the image outside the FOV is supposed to be null, i.e., zero pixel-valued. By using Zero BCs, the operator $K$ turns to be a Block Toeplitz with Toeplitz Blocks (BTTB) matrix. We will use the symbol $\mathcal{T}$
	to denote this class of matrices. The Zero BCs can be useful for some applications in astronomy, where an empty dark space surrounds a well located object. On the other hand, 
	they give rise to high ringing effects close to the boundary of the restored image in other classical imaging applications, where the background is not uniformly black.
	\item Periodic: the image inside the FOV is periodically repeated outside the FOV. By using Periodic BCs the operator $K$ turns to be Block Circulant with Circulant Blocks (BCCB) matrix. We will use the symbol $\mathcal{C}$
	to denote this class of matrices. Periodic BCs are computational favorable since the matrix $K$ can be easily diagonalized by Fast Fourier Transform (FFT),
	but in the restoration process they may also generate artifacts along the boundaries as well. 
	\item Reflective: in the Reflective BCs model the image inside the FOV is reflected outside the FOV, as there were a vertical mirror along each edge. That way, the pixel values across the boundary are extended so that the continuity of the	image is preserved at the boundary. The corresponding blurring matrix $K$ is then a Block Toeplitz with Toeplitz Block matrix plus a Block Hankel with Hankel Blocks matrix plus a Block Hankel with Toeplitz Blocks matrix plus  a Block Toeplitz with Hankel Blocks matrix, and can be diagonalized by Discrete Cosine Transform (DCT) when the PSF is symmetric; see \cite{NCT99}. We will use the symbol $\mathcal{R}$ to denote
	this class of matrices.
	\item Anti-Reflective: in the Anti-Reflective BCs model instead, the image inside the FOV is anti-reflected outside the FOV so that other than the continuity of the image at the boundary even the continuity of the normal derivatives are preserved at the boundary. The corresponding blurring matrix $K$ is a Block Toeplitz plus Hankel with
	Toeplitz plus Hankel blocks plus a low rank correction,
	which can be diagonalized by Anti-Reflective Transform (ART) when the PSF is symmetric; see \cite{Serra}.
	We will use the symbol $\mathcal{AR}$ to denote this class of matrices.
\end{itemize}
See Figure~\ref{boundary_conditions} for an illustration of the described BCs.

Both for Reflective and Anti-Reflective BCs a fast transform is available, but only if the
PSF is quadrantally symmetric, i.e. symmetric in both horizontal and vertical direction.
In all these cases, the matrix-vector product can be done in $O(m \log m)$ by FFT,
using a proper pad of the vector in agreement with the BCs imposed and then
performing a circulant convolution of double size.
In some cases, Reflective and Anti-Reflective BCs are even cheaper, since they require only real operations instead of complex ones without needing any padding; see \cite{ADS08}.

On the other hand, since equation \eqref{eq:modeld} is the product of the discretization of a compact operator, $K$ is severely ill-conditioned and may be singular. Such linear systems are commonly referred to as linear discrete ill-posed problems; see, e.g., \cite{HH} for a discussion.
Therefore a good approximation of $\fb$ cannot be obtained from the algebraic solution (e.g., the least-square solution) of
\eqref{eq:modeld}, but regularization methods are required. The basic idea of regularization is to replace the original ill-conditioned
problem with a nearby well-conditioned problem, whose solution approximates
the true solution. One of the popular regularization techniques is Tikhonov regularization and it amounts in solving
\begin{equation}\label{eq:tik}
\min_{\fb}\{\|K\fb - \gb\|_2^2 + \mu \|\fb\|_2^2\},
\end{equation}
where $\|\cdot \|_2$ denotes the vector $2$-norm and $\mu>0$ is a regularization parameter to be chosen. 
The first term in \eqref{eq:tik} is usually refereed to as fidelity term and the second as regularization term.
This approach is computationally attractive, since it leads to a linear problem and indeed several efficient methods have been developed for computing its solution and for estimating $\mu$; see \cite{HH}.
On the other hand, the edges of restored image are usually over-smoothed.
To overcome this unpleasant property, nonlinear strategies have been employed, like total variation (TV) \cite{ROF} and
thresholding iterative methods \cite{D3,FN03}. Anyway, several nonlinear regularization methods have an inner step that apply a least-square regularization and hence can benefit from strategies previously developed for such simpler model.

In the present paper, both the regularization strategies that we propose share two common ingredients: wavelet decomposition and $\ell_1$-norm minimization on the regularization term. This is motivated by the fact that most real images usually have sparse approximations under
some wavelet basis. In particular, in this paper we consider the tight frame systems previously used in \cite{CCSS03,COS,COS2}.
The redundancy of the tight frame system leads to robust signal representation in which partial loss of
the data can be tolerated without adverse effects. In order to obtain the sparse approximation, we minimize the weighted $\ell_1$-norm of the tight frame coefficients.
Let $W^*$ be a wavelet or tight-frame synthesis operator ($W^*W=I$), the
wavelets or tight-frame coefficients of the original image $\fb$ are $\xb$ such that
\begin{equation*}%\label{eq:wavecoeff}
\fb= W^*\xb, \qquad \mbox{and the blurring operator becomes } A=KW^*.
\end{equation*}

Within this frame set, the model equation \eqref{eq:modeld} translates into
\begin{equation}\label{eq:modeld2}
\gb = A\xb.
\end{equation} 
If we require to deal with positive (semi-)definite matrices, instead of the system \eqref{eq:modeld2}, we can solve the system of the normal equations
\begin{equation}\label{systemn}
A^{*}\gb= A^{*}A\xb,
\end{equation}
where $A^{*}$ is the conjugate transpose of $A$. This choice allows us to use many iterative methods, such as the iterated version of \eqref{eq:tik}, i.e., the iterated Tikhonov scheme, or the Conjugate Gradient (CG) and its generalizations.
Moreover, all iterative methods,
when are applied to the normal equations (\ref{systemn}),
become more stable, i.e. less sensitive with respect to data noise.
Unfortunately, in solving (\ref{systemn}) instead of (\ref{eq:modeld2}), the rate of convergence slows down. In this respect, the conventional technique to speed up the convergence is to consider the preconditioned system
\begin{equation*}%\label{Dprecsystemn}
DA^{*}\gb=DA^{*}A\xb,
\end{equation*}
where $D$ is the so-called preconditioner, whose role is to suitably approximate the (generalized) inverse of the normal matrix $A^{*}A$ \cite{Piana}. In \cite{Acqua} it was proposed a new technique that uses a single preconditioning operator directly applied to the system \ref{eq:modeld2}. The new preconditioner, called as \textit{reblurring matrix} $P$, according to the terminology introduced in \cite{DS}, leads to the new preconditioned system
\begin{equation*}%\label{Zprecsystem}
P\gb=PA\xb .
\end{equation*}
As pointed out in \cite{Acqua}, the aim of the preconditioner $P$ is to allow iterative methods to become more stable (as well as usually obtained through the normal equations involving $A^{*}$) without slowing the convergence (so that no subsequent accelerating operator $D$ is needed), especially in the so-called signal space, i.e. the subspace less sensitive to the data noise. Combining this approach with a soft-thresholding technique such as the modified linearized Bregman splitting algorithm \cite{YOGD}, in order to mimic the $\ell_1$-norm minimization, leads to reformulate iterative methods as the Landweber method replacing the following preconditioned iterative scheme
\begin{equation*}
\xb_{n+1}=\xb_n + \tau DA^{*}(\gb-A\xb_n)
\end{equation*}
with
\begin{equation*}%\label{landweber}
\xb_{n+1}=\xb_n + \tau P(\gb-A\S_\mu(\xb_n)),
\end{equation*}
where $\tau$ is a positive relaxation parameter and $\S_\mu(\cdot)$ is the soft-thresholding function as defined in \ref{eq:soft}. In the following we fix $\tau=1$, by applying an implicit rescaling of the preconditioned system matrix $PA$.

The paper is organized as follows: in Section~\ref{sec:appTik} we propose a generalization of an approximated iterative Tikhonov scheme that was firstly introduced in \cite{DH13} and then developed and adapted into different settings in \cite{B,CDBH}. Here the preconditioner $P$ takes the form
$$
P= B^*\left(BB^* + \alpha_n \Lambda\Lambda^*\right)^{-1},
$$
where $B$ is an approximation of $A$, in the sense that $B=CW^*$ with $C$ the discretization of the same problem \eqref{eq:model2} as the original blurring matrix $K$ but imposing Periodic BCs. The operator $\Lambda\Lambda^*$ can be a function of $CC^*$ or the discretization of a differential operator. The method is nonstationary and the parameter $\alpha_n$ is computed by solving a nonlinear problem with a computational cost of $O(m^2)$. Related work on this kind of preconditioner can be found in \cite{BDR17,BPRXX,HRY16}. In Section~\ref{sec:struct} we define a class of preconditioners $P$ endowed with the same structure of the system matrix $A$, as initially proposed in \cite{DDEM} and then further developed in \cite{BBD}. It is called structure preserving reblurring preconditioning strategy and we combine it with the generalized regularization filtering approach of the preceding Section~\ref{sec:appTik}. The idea is to preserve both the informations carried over by the spectra of the operator $A$ and the structure itself of the operator induced by the best fitting BCs. 
Section~\ref{sec:exp} contains a selection of significant numerical examples which confirm the robustness and quality of the proposed regularization schemes. Section~\ref{sec:conclusions} provides a resume of the techniques presented in this work and draws some conclusions. Finally, in Appendix~\ref{appendix} are provided proofs of convergence and regularization properties of the proposed algorithms. 

\section{Preconditioned Iterated Soft-thresholding Tikhonov with general regularizing operator}\label{sec:appTik}
\subsection{Preliminary definitions}\label{sec:def}
Before proceeding further, let us introduce here some definitions and notations that will be used even in the forthcoming sections. We consider 
$$
K : \left(\R^{m^2}, \|\cdot \| \right) \to \left(\R^{m^2}, \|\cdot \| \right)
$$ 
to be the discretization of a compact linear operator
\begin{equation*}%\label{model_eq1}
\gb=K\fb,
\end{equation*}
where the Euclidean 2-norm $\|\cdot \|$ is induced by the standard Euclidean inner product  
$$
\langle \fb^{(1)}, \fb^{(2)} \rangle_{\R^{m^2}} =\sum_{j=1}^m f^{(1)}_jf^{(2)}_j.  
$$
Hereafter, we will specify the vector space where the inner product $\langle \cdot, \cdot \rangle$ acts only whenever it is necessary for disambiguation. The analysis that will follow in the next sections will be performed generally on a perturbed data $\gb^\delta$, namely
\begin{equation*}%\label{model_eq2}
\gb^\delta=K\fb ,
\end{equation*}
with $\gb^\delta = \gb + \etab$, and where $\etab$ is a noise vector such that $\|\etab\|=\delta$, $\delta$ is called the noise level.

Let 
$$
C : \left(\R^{m^2}, \|\cdot \| \right) \to \left(\R^{m^2}, \|\cdot \| \right)
$$  
be the discretization of a compact linear operator that approximates $A$, in a sense that will be specified later. Let 
$$
W : \left(\R^{m^2}, \|\cdot \| \right) \to \left(\R^{s}, \|\cdot \| \right)
$$
be such that 
$$
W^*W = I,
$$

where $W^* : \R^s \to \R^{m^2}$ indicates the adjoint operator of $W$, i.e.,  $\langle W\fb, \ub\rangle_{\R^{s}} = \langle \fb, W^*\ub\rangle_{\R^{m^2}}$ for each pair $\fb\in \R^{m^2}, \ub \in \R^s$. We define 
$$
\xb= W\fb, \qquad A= KW^*, \qquad B= CW^*.
$$
Let us introduce the following matrix norm. Given a generic linear operator
$$
L : \left(\R^s, \|\cdot\|_\infty\right) \to \left(\R^{m^2}, \|\cdot\|\right),
$$
where $\|\cdot \|_\infty$ is the sup norm, let us define the matrix norm $\vertiii{\cdot}$ as
\begin{equation}\label{def:norm}
\vertiii{L}:= \sup_{\|\xb \|_\infty \equiv 1} \|L\xb\|_2.
\end{equation}
Finally, let $\mu\geq 0$ and let $\S_\mu : \mathbb{R}^s \to \mathbb{R}^s$ be such that
\begin{equation}\label{eq:soft}
[\S_\mu(\mathbf{u})]_i=S_\mu(u_i),
\end{equation}
with $S_\mu$ the soft-thresholding function
\[
S_\mu(u_i)={\rm sgn}(u_i)\max\left\{|u_i|-\mu,\,0\right\}.
\]

\subsection{General regularization operator as $h(CC^*)$}
Let $h: [0, \|CC^*\|^2] \to \R$ be a continuous function such that 
$$
0<c_1\leq h(\sigma^2)\leq c_2.
$$
Define $c:= c_1/c_2$. We can now introduce the following algorithm. 
\begin{algorithm}
	\caption{$\mbox{PISTA}_h$}\label{P=f(CC^*) circulant}
	\begin{algorithmic}
		\State 	Fix $\zb^0\in \R^s$, $\delta >0$ and set $\xb^{0} = \S_\mu(\zb^0)$, $\rb^0 = \gb - A \xb^0$.
		\State  Set $\rho \in (0,c/2)$ and $q \in (2\rho,c)$.
		\State  Compute $\tau = \frac{1+ 2\rho}{c- 2\rho}$ and $\rb^n = \gb - A \xb^n$.
		\While{$\| \rb^n \| > \tau \delta$}
		\State Compute $\tau_n := \|\rb^n \|/ \delta$.
		\State Compute $q_n := \max \{ q, 2\rho + (1+\rho)/\tau_n \}$.
		\State Compute $\alpha_n$ such that
		\begin{eqnarray}\label{eq:Alg4_rev_7}
		\alpha_n \|(CC^* + \alpha_n h\left(CC^*\right))^{-1} \rb^n\| = \frac{q_n}{c_1}\|\rb^n\|.
		\end{eqnarray}
		\EndWhile 
		\State Compute 
		\begin{equation}\label{eq:hath}
		\hb^n = WC^* (CC^* + \alpha_n h\left(CC^*\right))^{-1} \rb^n.
		\end{equation}
		\State Compute 
		%	\BState \emph{loop}:
		\begin{equation*}%\label{eq:Alg4_revised}
		\left\{
		\begin{array}{l}
		\zb^{n+1}=\zb^{n}+ \hb^n, \\
		\xb^{n+1}=\S_\mu(\zb^{n+1}).
		\end{array}
		\right.
		\end{equation*}
	\end{algorithmic}
\end{algorithm}

A rigorous and full detailed analysis of the preceding algorithm will be performed in Appendix~\ref{appendix}. In order to prove all the desired properties we will need a couple of assumptions on the operators $K$ , $C$, and on the parameter $\mu$, that we present here below. 	
\begin{assumption}\label{hp:1}
	\begin{subequations}\label{eq:Alg4_rev_00}
		\begin{equation}\label{eq:Alg4_rev_0}
		\| \left( C - K \right) \fb \| \leq \rho \| K \fb \|, \qquad \forall \, \fb \in \R^{m^2},
		\end{equation}
		and
		\begin{equation}\label{eq:Alg4_rev_01}
		\mu \leq \frac{\rho\delta}{\vertiii{B}}, 
		\end{equation}
	\end{subequations}
	with a fixed $0 < \rho < c/2$, where $\delta=\|\etab\|$ is the noise level and where $\vertiii{\cdot}$ is the operator norm defined in \eqref{def:norm}.
\end{assumption}
Let us observe that Equation \eqref{eq:Alg4_rev_0} translates into
\begin{equation}\label{eq:Alg4_rev_1}
\| ( B - A ) \ub \| \leq \rho \| A \ub \|, \qquad \forall \, \ub \in \R^s.%\tag{16a'}
\end{equation}
Let us spread some light on the preceding conditions. Assumption \eqref{eq:Alg4_rev_0}, or equivalently \eqref{eq:Alg4_rev_1}, is a strong assumption. It may be hard to satisfy it for every specific problem, as it implies
\begin{equation*}%\label{eq:2c}
\left(1-\rho\right)\|K\vb\|\leq \|C\vb\| \leq \left(1+\rho\right)\|K\vb\|  \qquad \mbox{for all } \vb \in \R^{m^2},%\tag{16c}
\end{equation*}
or equivalently 
\begin{equation}\label{eq:2c'}
\left(1-\rho\right)\|A\ub\|\leq \|B\ub\| \leq \left(1+\rho\right)\|A\ub\|  \qquad \mbox{for all } \ub \in \R^{s}, %\tag{16c'}
\end{equation}
that is, $K$ and $C$ are spectrally equivalent. Nevertheless, in image deblurring the boundary conditions have a very local effect, i.e., the approximation error $C-K$ can be decomposed as
$$
C-K = E+R,
$$  
where $E$ is a matrix of small norm (and the zero matrix if the PSF is compactly supported), and $R$ is a matrix of small rank, compared to the dimension of the problem. This suggests that Assumption \eqref{eq:Alg4_rev_0} needs to be satisfied only in a relatively small subspace, supposedly being a zero measure subspace. In particular only for every $\eb^{n}_\delta$, with $n\geq N$ and $N$ fixed, such that Proposition \ref{prop:Alg4_rev_1} could hold. All the numerical experiments are consistent with this observation but for a deeper understanding and a full treatment of this aspect we refer the reader to \cite[Section 4]{DH13}.  

On the other hand instead, Assumption \eqref{eq:Alg4_rev_01} is quite natural. It is indeed equivalent to require that
\begin{equation*}%\label{eq:Alg4_rev_2}
\| B\left(\ub - \S_\mu (\ub)\right)\|\leq \rho\delta %\tag{16b'},
\end{equation*} 
that is, the soft-thresholding parameter $\mu = \mu(\delta)$ is continuously noise-dependent and it holds that $\mu(\delta) \to 0$ as $\delta \to 0$.

\subsection{General regularization operator as $\Lambda\Lambda^*$}
In image deblurring, in order to better preserve the edges of the reconstructed solution, it is usually introduced a differential operator $\Lambda\Lambda^*$, where $\Lambda: \X \to \Y$ is chosen as a first or second order differential operator which holds in its kernel all these functions which posses the key features of the true solution that we wish to preserve. In particular, since we are interested to recover the edges and curves of discontinuities of the true image, it is a common choice to rely on the Laplace operator with Neumann BCs, see \cite{EHN96}. In these recent papers \cite{BD,HS}, observing the spectral distribution of the Laplacian, it was proposed to substitute $\Lambda\Lambda^*$ with
$$
h(CC^*)= \left(I - \frac{CC^*}{\|CC^*\|}\right)^j,
$$
with $j \in \mathbb{N}$. 

Adding some new assumptions, we propose a modified version of the preceding Algorithm \ref{P=f(CC^*) circulant} that can take into account directly the operator $\Lambda$.

\begin{assumption}\label{hp:2}
	\begin{subequations}\label{eq:Alg4_rev_02}
		\begin{equation*}\label{eq:Alg4_rev_021}
		\textnormal{Ker}(K) \cap \textnormal{Ker}(\Lambda) = \{0\};
		\end{equation*}
		\begin{equation*}\label{eq:Alg4_rev_022}
		C_{|\textnormal{Ker}(\Lambda)} = K_{|\textnormal{Ker}(\Lambda)};
		\end{equation*}
		\begin{equation*}\label{eq:Alg4_rev_023}
		C \mbox{ and } \Lambda \mbox{ are diagonalized by the same unitary transform.}
		\end{equation*}
	\end{subequations}
\end{assumption}

\begin{algorithm}\label{alg:approx_lambda}
	\caption{$\mbox{PISTA}_\Lambda$}\label{P=LL* circulant}
	\begin{algorithmic}
		\State 	Fix $\zb^0\in \R^s$, $\delta >0$ and set $\xb^{0} = \S_\mu(\zb^0)$, $\rb^0 = \gb - A \xb^0$.
		\State  Set $\rho \in (0,1/2)$ and $q \in (2\rho,1)$.
		\State  Compute $\tau = \frac{1+ 2\rho}{1- 2\rho}$ and $\rb^n = \gb - A \xb^n$.
		\While{$\| \rb^n \| > \tau \delta$}
		\State Compute $\tau_n := \|\rb^n \|/ \delta$.
		\State Compute $q_n := \max \{ q, 2\rho + (1+\rho)/\tau_n \}$.
		\State Compute $\alpha_n$ such that
		\begin{eqnarray*}%\label{eq:Alg4_rev_7_2}
		\alpha_n \|(CC^* + \alpha_n\Lambda\Lambda^*)^{-1} \rb^n\| = q_n\|\rb^n\|.
		\end{eqnarray*}
		\EndWhile 
		\State Compute 
		\begin{equation}\label{eq:hath_2}
		\hb^n = WC^* (CC^* + \alpha_n \Lambda\Lambda^*)^{-1} \rb^n.
		\end{equation}
		\State Compute 
		%	\BState \emph{loop}:
		\begin{equation*}\label{eq:Alg4_revised_2}
		\left\{
		\begin{array}{l}
		\zb^{n+1}=\zb^{n}+ \alpha_n\hb^n, \\
		\xb^{n+1}=\S_\mu(\zb^{n+1}).
		\end{array}
		\right.
		\end{equation*}
	\end{algorithmic}
\end{algorithm}

We skip all the proofs of convergence since they can be recovered easily adapting the proofs in Section \ref{appendix} with \cite[Section 4]{B}.

\section{Structured PISTA with general regularizing operator}\label{sec:struct}
The structured case is a generalization of what developed in \cite{DDEM,BBD}, merging these ideas with the general approach described in Section \ref{sec:appTik}. We skip some details since they can be easily recovered from the aforementioned papers.

The creation of the blurring matrix $K$ is based on two ingredients: 	the PSF and the BCs enforced in the discretization. As already sketched in the Introduction, the latter choice gives rise to different types of structured matrices. For notational simplicity we consider a square PSF $H_\kappa \in \mathbb{R}^{k\times k}$ and we suppose that the position of the PSF center is known.

Given the pixels $\kappa_{i,j}$ of the PSF, it is possible to associate the so-called generating
	function $\kappa: \mathbb{R}^{2} \rightarrow \mathbb{C}$ as follows
	\begin{equation}\label{genfunct}
	\kappa(x_{1},x_{2}) %= \overset{n_{1}}{\underset{j_{1}=-m_{1}}{\sum }}\underset{%
		%j_{2}=-m_{2}}{\overset{n_{2}}{\sum }}h_{j_{1},j_{2}}\mathrm{e}^{\mathrm{\hat{%
			%	&\imath}}(j_{1}x_{1}+j_{2}x_{2})}
	= \sum_{i,j=-m+1}^{m-1} \kappa_{i,j}\mathrm{e}^{\mathrm{\hat{\imath}}%
		(i x_{1}+j x_{2})}\text{ ,}
	\end{equation}
	where $\mathrm{\hat{\imath}}^{2}=-1$ and with the assumption that $\kappa_{i,j}=0$ if the element $(\kappa_{i,j})$ does not belong to $H_\kappa$ \cite{D}.
	Note that $\kappa_{j,j}$ are the Fourier coefficients of $\kappa\in {\rm span}\{\mathrm{e}^{\mathrm{\hat{\imath}}%
		(ix_{1}+jx_{2})}, i,j=-k,\dots,k\}$, so that the generating function $\kappa$
	contains the same information of $H$.

Summarizing the notation that we set in the Introduction about the BCs, we have
\begin{equation*}
	\begin{array}{ll}
	\text{Zero BCs:} & K=\mathcal{T}_{m}(\kappa), \\
	\text{Periodic BCs:} & K=\mathcal{C}_{m}(\kappa)=\mathcal{T}_{m}(\kappa)+\mathcal{B}_{m}^{\mathcal{C}}(\kappa), \\
	\text{Reflective BCs:} & K=\mathcal{R}_{m}(\kappa)=\mathcal{T}_{m}(\kappa)+\mathcal{B}_{m}^{\mathcal{R}}(\kappa), \\
	\text{Anti-Reflective BCs:} & K=\mathcal{AR}_{m}(\kappa)=\mathcal{T}_{m}(\kappa)+\mathcal{B}_{m}^{\mathcal{AR}}(\kappa).
	\end{array}
\end{equation*}
We notice that in all these four cases $K$ has a Toeplitz structure $\mathcal{T}_{m}(\kappa)$ which depends on $\kappa$ and given by the shift-invariant structure of the continuous operator, plus a correction term $\mathcal{B}_{m}^{\mathcal{X}}(\kappa),\, {\mathcal{X}} = {\mathcal{C}},{\mathcal{R}},{\mathcal{AR}}$ depending on the chosen BCs.
	
In conclusion, we employ the unified notation $K=\mathcal{M}_{m}(\kappa)$, where $\mathcal{M}(\cdot)$ can be any of the classes of matrices just introduced (i.e. $\mathcal{T}$, $\mathcal{C}$, $\mathcal{R}$, $\mathcal{AR}$). This notation highlights the two crucial ingredients that form $K$: the blurring phenomena associated with the PSF described by $\kappa$ and the involved BCs represented by $\mathcal{M}$.

Given the generating function $\kappa$ \eqref{genfunct} associated to the PSF $H_\kappa$, let us compute the eigenvalues $u_{i,j}$ of the corresponding BCCB matrix $\mathcal{C}_m(\kappa):=C$ by the means of a 2D-FFT, where $i,j=0,\cdots,m-1$. Fix a regularizing (differential) operator $\Lambda\Lambda^*$ as in Section~\ref{sec:appTik}, and suppose that the Assumptions~\ref{hp:1}~and~\ref{hp:2} holds. The differential operator can be of the form $\Lambda\Lambda^*=h(CC^*)$, as in Algorithm~\ref{P=f(CC^*) circulant} as well. Let now 
\begin{equation*}%\label{filt_four_coef}
v_{i,j} = \frac{\overline{u}_{i,j}}{|u_{i,j}|^2 + \alpha_n|\sigma_{i,j}|^2},
\end{equation*}
be the new eigenvalues after the application of the Tikhonov filter to $u_{i,j}$, where $\sigma_{i,j}$ are the eigenvalues (singular values) of $\Lambda$ and $\alpha_n$ is computed as in Algorithm \ref{P=f(CC^*) circulant}-\ref{P=LL* circulant}. Let us compute now the coefficients $\hat{\kappa}_{i, j}$ of
\begin{equation}\label{genfunct2}
\hat{\kappa}(x_1,x_2)=\sum_{i,j=-m+1}^{m-1}\hat{\kappa}_{i,j}\rm{e}^{\mathrm{\hat{\imath}}(ix_1+jx_2)}
\end{equation} 
by the means of a 2D-iFFT and, finally, let us define 
$$
P = \mathcal{M}_m(\hat{\kappa}),
$$
where $\mathcal{M}(\cdot)$ corresponds to the most fitting BCs for the model problem \eqref{eq:model2}.

We are ready now to formulate the last algorithm.

\begin{algorithm}
	\caption{$\mbox{Struct-PISTA}_\Lambda$}\label{struc1}
	\begin{algorithmic}
		\State  Fix $H_\kappa$, BCs, $\Lambda$.
		\State Set $C=\mathcal{C}_m(\kappa)$.
		\State  Get $\{u_{i,j}\}_{i,j=0}^{n-1}$ by computing an FFT of $H_\kappa$.
		\State 	Fix $\zb^0\in \R^s$, $\delta >0$ and set $\xb^{0} = \S_\mu(\zb^0)$, $\rb^0 = \gb - K \xb^0$.
		\State  Set $\rho \in (0,1/2)$ and $q \in (2\rho,1)$.
		\State  Compute $\tau = \frac{1+ 2\rho}{1- 2\rho}$ and $\rb^n = \gb - K \xb^n$.
		%	\State Set $\alpha_0 =1$.
		\While{$\| \rb^n \| > \tau \delta$}
		\State Compute $\tau_n := \|\rb^n \|/ \delta$.
		\State Compute $q_n := \max \{ q, 2\rho + (1+\rho)/\tau_n \}$.
		\State Compute $\alpha_{n}$ such that
		\begin{eqnarray*}
		\alpha_n \|(CC^* + \alpha_n \Lambda\Lambda^*)^{-1} \rb^n\| = q_n\|\rb^n\|.
		\end{eqnarray*}
		\EndWhile 
		\State Compute $v_{i,j}=\frac{\overline u_{i,j}}{\left\vert u _{i,j}\right\vert^2+\alpha_{n}|\sigma_{i,j}|^2}$.
		\State Get the mask $\widetilde{H}$ of the coefficients $\hat{\kappa}_{i,j}$ of $\hat{\kappa}$ of \eqref{genfunct2} by computing an IFFT of $\{v_{i,j}\}_{i,j=0}^{m-1}$.
		\State Generate the matrix $P:=\mathcal{M}_m(\hat{\kappa})$ from the coefficient mask $\widetilde{H}$ and BCs.
		\State Compute 
		\begin{equation}\label{h_struc}
		\hb^n = P\rb^n.
		\end{equation}
		\State Compute 
		%	\BState \emph{loop}:
		\begin{equation*}
		\left\{
		\begin{array}{l}
		\zb^{n+1}=\zb^{n}+ \hb^n, \\
		\xb^{n+1}=\S_\mu(\zb^{n+1}).
		\end{array}
		\right.
		\end{equation*}
	\end{algorithmic}
\end{algorithm}
In the case that $\Lambda\Lambda^*=h(CC^*)$, then the algorithm is modified in the following way:
$$
\rho \in (0,c/2), \quad q \in (2\rho,c), \quad \alpha_n \|(CC^* + \alpha_n \Lambda\Lambda^*)^{-1} \rb^n\| = \frac{q_n}{c_1}\|\rb^n\|
$$
where $0<c_1\leq h(\sigma^2)\leq c_2$, $c:=c_1/c_2$. We will denote this version by $\mbox{Struct-PISTA}_h$. We will not provide a direct proof of convergence for this last algorithm. Let us just observe that the difference between \eqref{h_struc} and \eqref{eq:hath_2}-\eqref{eq:hath} is just a correction of small rank and small norm.

\section{Numerical experiments}\label{sec:exp}
We now compare the proposed algorithms with some methods from the literature. In particular, we consider the AIT-GP algorithm described in \cite{B} and the ISTA algorithm described in \cite{D3}. The AIT-GP method can be seen as Algorithm \ref{P=LL* circulant} with $\mu=0$, while the ISTA algorithm is equivalent to iterations of Algorithm~2 without the preconditioner. These comparisons allow us to show how the quality of the reconstructed solution is improved by the presence of both the soft-thresholding and the preconditioner.

The ISTA method and our proposals require the selection of a regularization parameter. For all these methods we select the parameter that minimizes the relative restoration error defined by
$$
{\rm RRE}(\mathbf{f})=\frac{\|\mathbf{f}-\mathbf{f}_{\rm true}\|}{\|\mathbf{f}_{\rm true}\|}.
$$ 
For the comparison of the algorithms we consider the Peak Signal to Noise Ratio (PSNR) defined by
$$
{\rm PSNR}(\mathbf{f})=20\log_{10}\left(\frac{mM}{\|\mathbf{f}-\mathbf{f}_{\rm true\|}}\right),
$$
where $m^2$ is the the number of elements of $\mathbf{f}$ and $M$ denotes the maximum value of $\mathbf{f}_{\rm true}$. Moreover, we consider the Structure SIMilarity index (SSIM), the definition of the SSIM is involved, here we recall that this index measures how accurately the computed approximation is able to reconstruct the overall structure of the image. The higher the value of the SSIM the better the reconstruction is, and the maximum value achievable is $1$; see \cite{SSIM} for a precise definition of the SSIM.

We now describe how we construct the operator $W$. We use the tight frames determined by linear B-splines; see, e.g., \cite{BR19}. For one-dimensional problems they are composed by a low-pass filter $W_0\in\R^{m\times m}$ and two high-pass filters $W_1\in\R^{m\times m}$ and $W_2\in\R^{m\times m}$. These filters are determined by the masks are given by
\begin{equation*}
u^{(0)}=\frac{1}{4}[1,2,1],\quad u^{(1)}=\frac{\sqrt{2}}{4}[1,0,-1],\quad 
u^{(2)}=\frac{1}{4}[-1,2,-1].
\end{equation*}
Imposing reflexive boundary conditions we determine the analysis operator $W$ so that $W^*W=I$. Define the matrices
\begin{equation*} W_{0}=\frac{1}{4}\left(\begin{array}{ccccc}
3 & 1 & 0& \dots &0\\
1 & 2 & 1  \\
&  \ddots& \ddots & \ddots \\
& &  1 & 2 & 1 \\
0& \dots & 0 & 1 &3
\end{array}\right)
,\quad
W_{1}=\frac{\sqrt2}{4}\left(\begin{array}{ccccc}
-1 & 1 & 0& \dots &0\\
-1 & 0 & 1  \\
&  \ddots& \ddots & \ddots \\
& &  -1 & 0 & 1 \\
0& \dots & 0 & -1 &1
\end{array}\right),
\end{equation*}
and 
\begin{equation*}
W_{2}=\frac{1}{4}\left(\begin{array}{ccccc}
1 & -1 & 0& \dots &0\\
-1 & 2 & -1  \\
&  \ddots& \ddots & \ddots \\
& &  -1 & 2 & -1 \\
0& \dots & 0 & -1 &1
\end{array}\right).
\end{equation*}
Then the operator $W$ is defined by
\begin{equation*}
W=\left(\begin{array}{c}
W_0\\W_1\\W_2
\end{array}\right).
\end{equation*}

To construct the two-dimensional framelet analysis operator we use the tensor products 
\begin{equation*}
W_{i,j}=W_{i}\otimes W_{j}, \quad i,j=0,1,2.
\end{equation*}
The matrix $W_{00}$ is a low-pass filter; all the other matrices $W_{ij}$ contain at least one high-pass filter. The analysis operator is given by
\begin{equation*}
W=\left[\begin{array}{c}
W_{00}\\
W_{01}  \\
\vdots\\
W_{22} \\
\end{array}
\right].
\end{equation*}

In $\mbox{PISTA}_h$, following \cite{HS}, we set 
$$
h(x)=\left(1-\frac{x}{\|A\|^2}\right)^4+10^{-15}.
$$

All the computations are performed on MATLAB R2018b running on a laptop with an Intel i7-8750H @2.20 GHz CPU and 16GB of RAM.

\paragraph{Cameraman}
We first consider the cameraman image in Figure~\ref{fig:cameraman}(a) and we blur it with the non-symmetric PSF in Figure~\ref{fig:cameraman}(b). We then add $2\%$ white Gaussian noise obtaining the blurred and noisy image in Figure~\ref{fig:cameraman}(c). Note that we crop the boundaries of the image to simulate real data; see \cite{HNO05} for more details. Since the image is generic we impose reflexive BCs.

In Table~\ref{tbl:compare} we report the results obtained with the different methods. We can observe that $\mbox{Struct-PISTA}_h$ provides the best reconstruction of all considered algorithms. Moreover, we can observe that, in general, the introduction of the structured preconditioner improves the quality of the reconstructed solutions, especially in term of SSIM. From the visual inspection of the reconstructions in Figure~\ref{fig:cameraman_rec} we can observe that the introduction of the structured preconditioner allows us to evidently reduce the boundary artifacts as well as avoid the amplification of the noise.

\begin{figure}
	\centering
	\begin{minipage}[c]{0.3\textwidth}
		\centering
		\includegraphics[width=\textwidth]{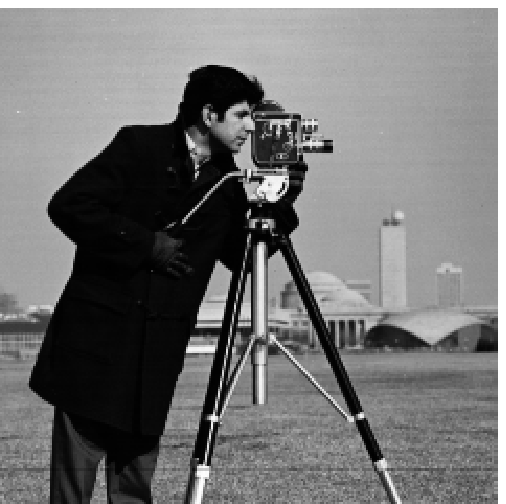}\\(a)
	\end{minipage}
	\begin{minipage}[c]{0.3\textwidth}
		\centering
		\includegraphics[width=\textwidth]{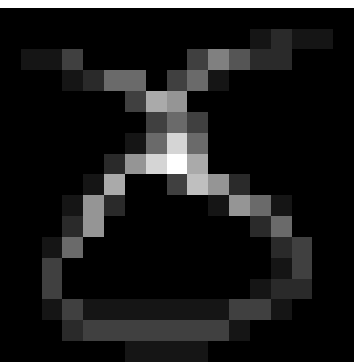}\\(b)
	\end{minipage}
	\begin{minipage}[c]{0.3\textwidth}
		\centering
		\includegraphics[width=\textwidth]{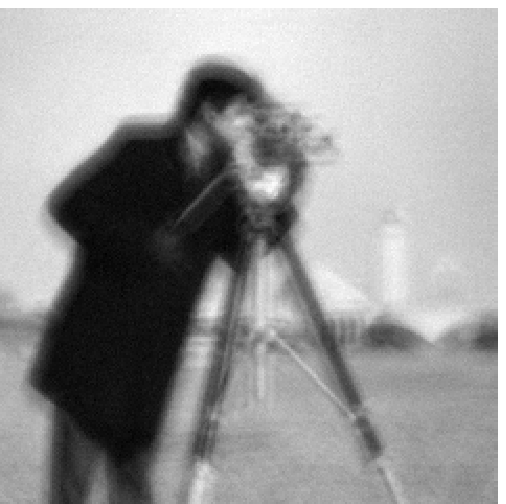}\\(c)
	\end{minipage}
	\caption{Cameraman test problem: (a) True image ($238\times 238$ pixels), (b) PSF ($17\times 17$ pixels), (c) Blurred and noisy image with $2\%$ of white Gaussian noise ($238\times 238$ pixels).}
	\label{fig:cameraman}
\end{figure}

\begin{figure}
	\centering
	\begin{minipage}[c]{0.3\textwidth}
		\centering
		\includegraphics[width=\textwidth]{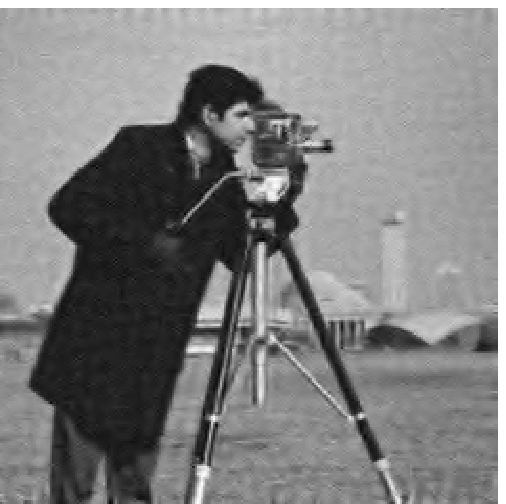}\\(a)
	\end{minipage}
	\begin{minipage}[c]{0.3\textwidth}
		\centering
		\includegraphics[width=\textwidth]{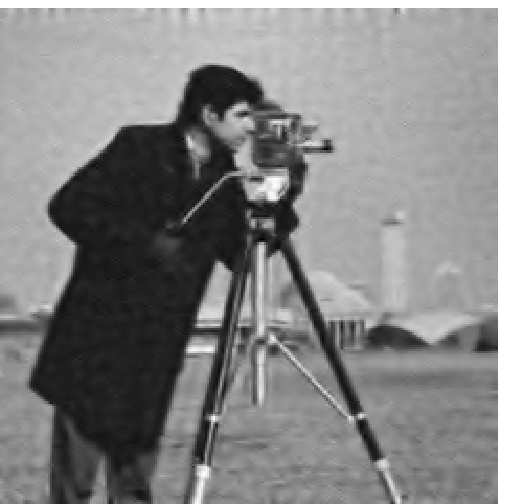}\\(b)
	\end{minipage}
	\begin{minipage}[c]{0.3\textwidth}
		\centering
		\includegraphics[width=\textwidth]{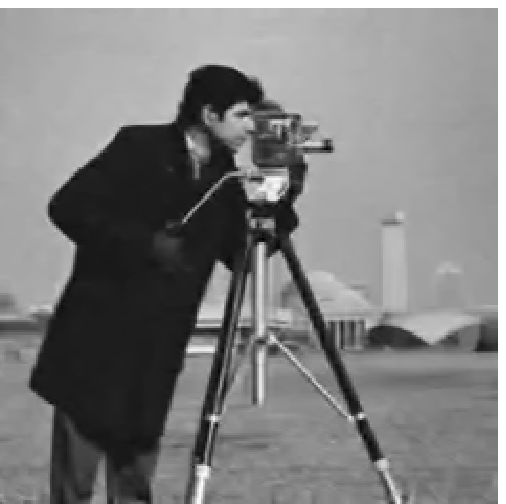}\\(c)
	\end{minipage}
	\caption{Cameraman test problem reconstructions: (a) ISTA, (b) $\mbox{PISTA}_h$, (c)  $\mbox{Struct-PISTA}_h$.}
	\label{fig:cameraman_rec}
\end{figure}

\paragraph{Grain}
We now consider the grain image in Figure~\ref{fig:grain}(a) and blur it with the PSF, obtained by the superposition of two motions PSF, in Figure~\ref{fig:grain}(b). After adding $3\%$ of white Gaussian noise and cropping the boundaries we obtain the blurred and noisy image in Figure~\ref{fig:grain}(c). According to the nature of the image we use reflexive bc's. 

Again in Table~\ref{tbl:compare} we report all the results obtained with the considered methods. In this case ISTA provides the best reconstruction in terms of RRE and PSNR. However, $\mbox{Struct-PISTA}_h$ provides  the best reconstruction terms of SSIM and very similar results in term of PSNR and RRE. In Figure~\ref{fig:grain_rec} we report some of the reconstructed solution. From the visual inspection of these reconstruction we can see that the introduction of the structured preconditioner reduces the ringing and boundary effects in the computed solutions. 

\begin{figure}
	\centering
	\begin{minipage}[c]{0.3\textwidth}
		\centering
		\includegraphics[width=\textwidth]{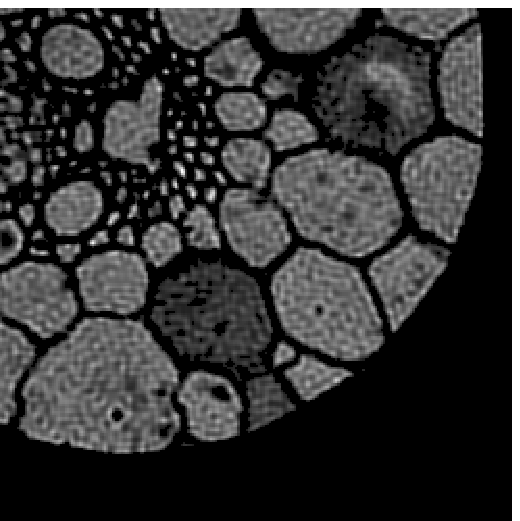}\\(a)
	\end{minipage}
	\begin{minipage}[c]{0.3\textwidth}
		\centering
		\includegraphics[width=\textwidth]{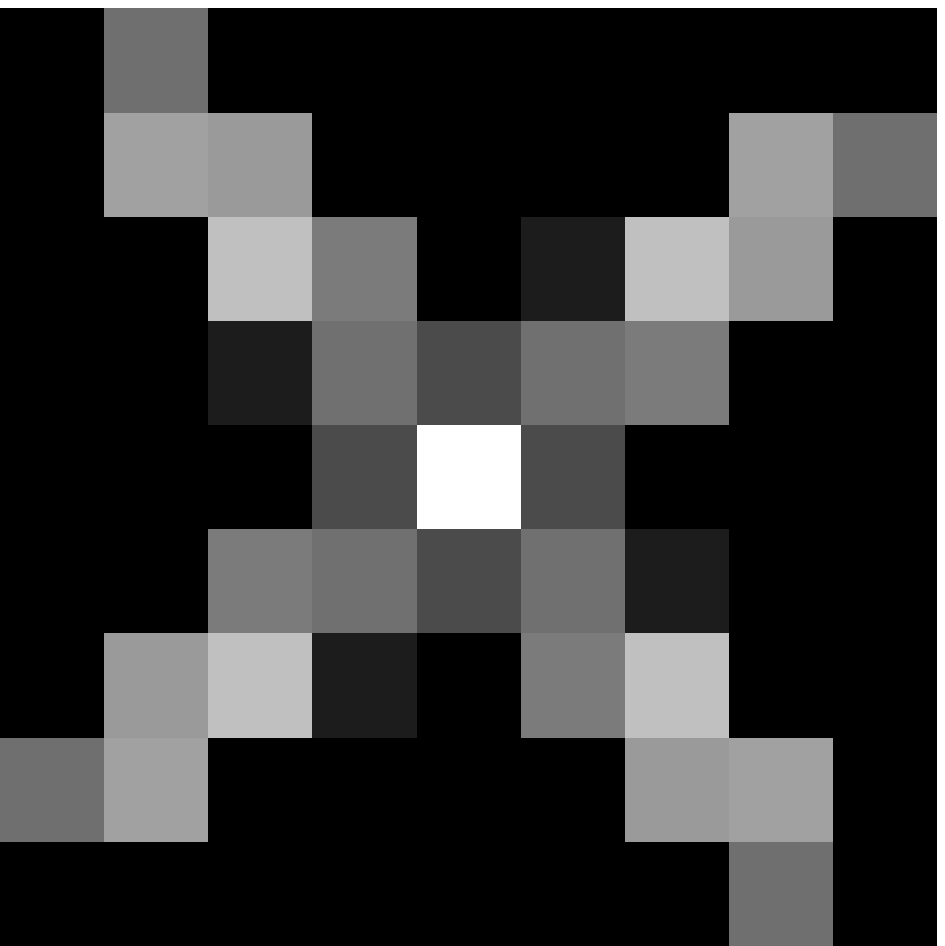}\\(b)
	\end{minipage}
	\begin{minipage}[c]{0.3\textwidth}
		\centering
		\includegraphics[width=\textwidth]{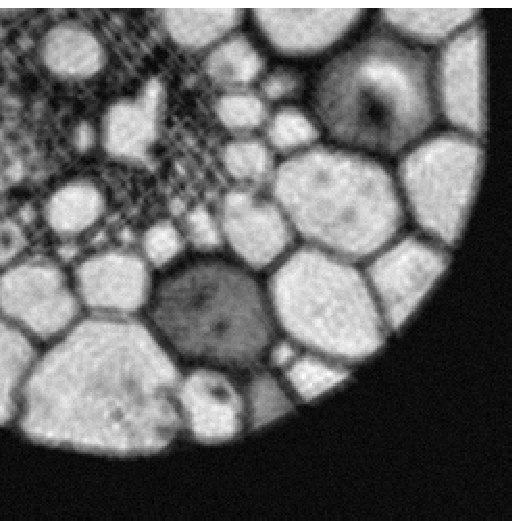}\\(c)
	\end{minipage}
	\caption{Grain test problem: (a) True image ($246\times 246$ pixels), (b) PSF ($9\times 9$ pixels), (c) Blurred and noisy image with $3\%$ of white Gaussian noise ($246\times 246$ pixels).}
	\label{fig:grain}
\end{figure}

\begin{figure}
	\centering
	\begin{minipage}[c]{0.3\textwidth}
		\centering
		\includegraphics[width=\textwidth]{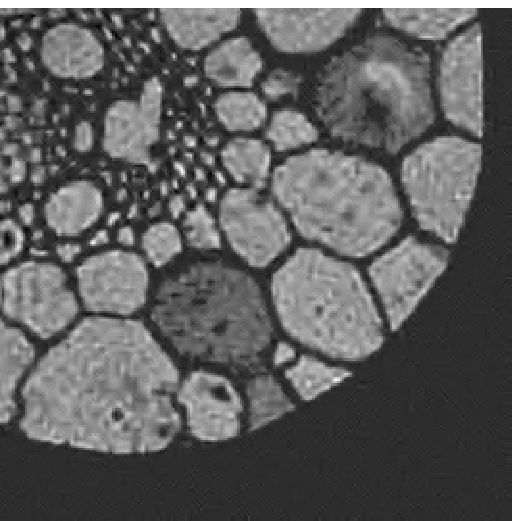}\\(a)
	\end{minipage}
	\begin{minipage}[c]{0.3\textwidth}
		\centering
		\includegraphics[width=\textwidth]{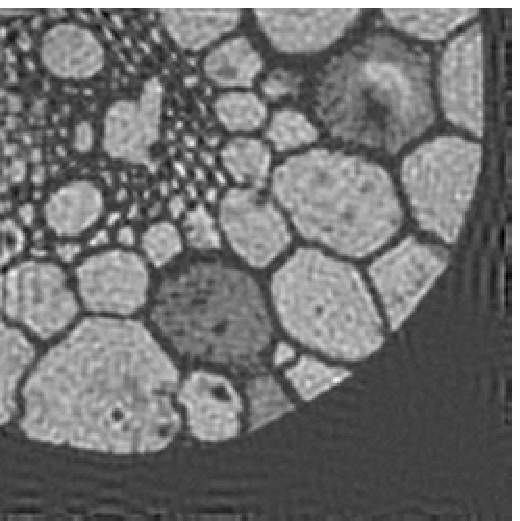}\\(b)
	\end{minipage}
	\begin{minipage}[c]{0.3\textwidth}
		\centering
		\includegraphics[width=\textwidth]{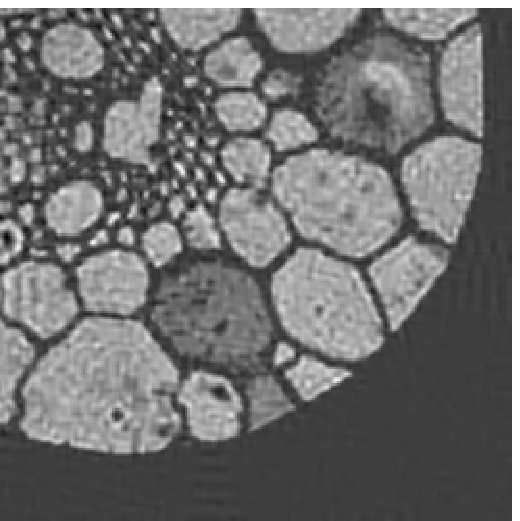}\\(c)
	\end{minipage}
	\caption{Grain test problem reconstructions: (a) ISTA, (b) $\mbox{PISTA}_\Lambda$, (c)  $\mbox{Struct-PISTA}_\Lambda$.}
	\label{fig:grain_rec}
\end{figure}
\paragraph{Satellite}
Our final example is the \texttt{atmosphericBlur30} from the MATLAB toolbox RestoreTool \cite{RestoreTools}. The true image, PSF, and blurred and noisy image are reported in Figures~\ref{fig:satellite}(a), (b), and (c), respectively. Since we know the true image we can estimate the noise level in the image, which is approximately $1\%$. Since this is an astronomical image we impose zero bc's.

From the comparison of the computed results in Table~\ref{tbl:compare} we can see that the $\mbox{Struct-PISTA}_h$  method provides the best reconstruction among all considered methods. We can observe that, in this particular example, ISTA provides a very low quality reconstruction both in term of RRE and SSIM. We report in Figure~\ref{fig:satellite_rec} some reconstructions. From the visual inspection of the computed solutions we can observe that both the approximations obtained with $\mbox{PISTA}_h$ and $\mbox{Struct-PISTA}_h$ do not present heavy ringing effects, while the reconstruction obtained by AIT-GP presents very heavy ringing around the ``arms'' of the satellite. This allows us to show the benefits of introducing the soft-thresholding into the AIT-GP method.
\begin{figure}
	\centering
	\begin{minipage}[c]{0.3\textwidth}
		\centering
		\includegraphics[width=\textwidth]{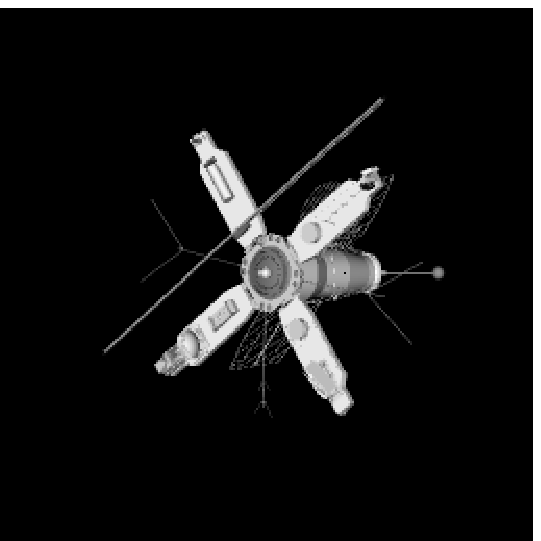}\\(a)
	\end{minipage}
	\begin{minipage}[c]{0.3\textwidth}
		\centering
		\includegraphics[width=\textwidth]{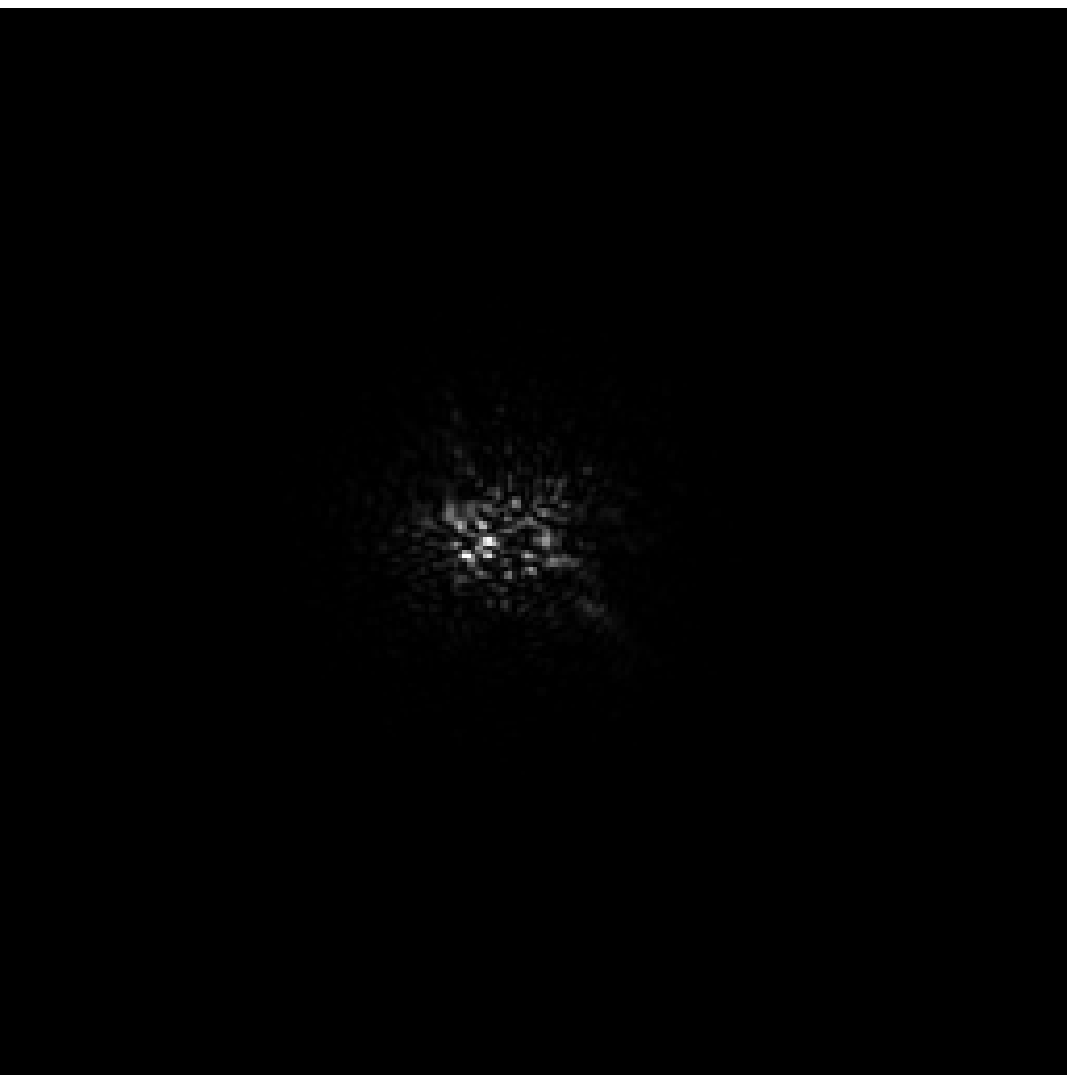}\\(b)
	\end{minipage}
	\begin{minipage}[c]{0.3\textwidth}
		\centering
		\includegraphics[width=\textwidth]{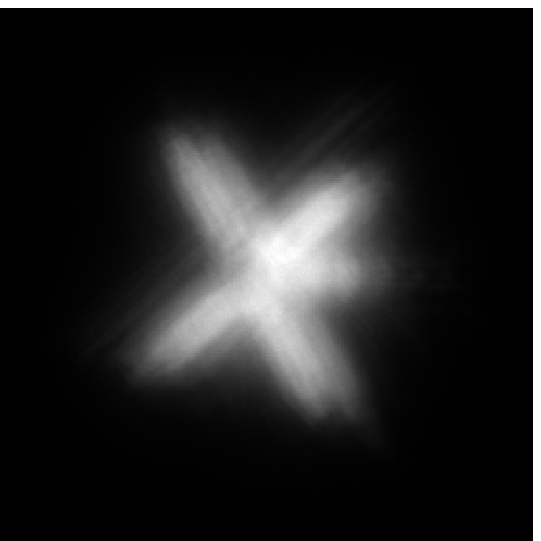}\\(c)
	\end{minipage}
	\caption{Satellite test problem: (a) True image ($256\times 256$ pixels), (b) PSF ($256\times 256$ pixels), (c) Blurred and noisy image with $\approx1\%$ of white Gaussian noise ($256\times 256$ pixels).}
	\label{fig:satellite}
\end{figure}

\begin{figure}
	\centering
	\begin{minipage}[c]{0.3\textwidth}
		\centering
		\includegraphics[width=\textwidth]{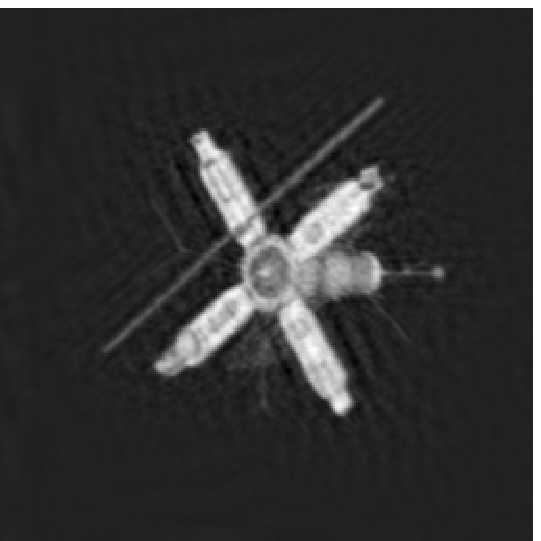}\\(a)
	\end{minipage}
	\begin{minipage}[c]{0.3\textwidth}
		\centering
		\includegraphics[width=\textwidth]{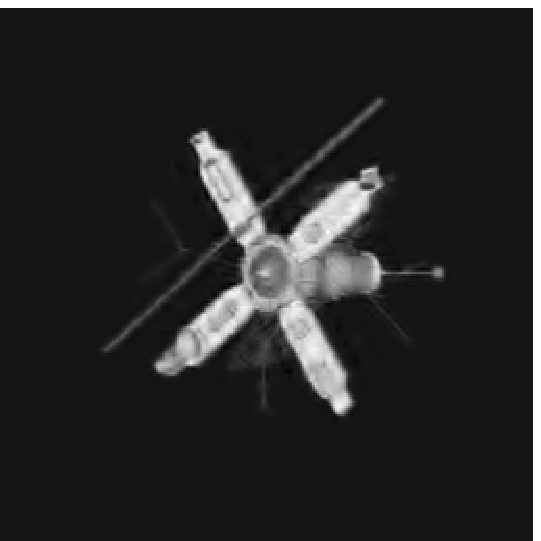}\\(b)
	\end{minipage}
	\begin{minipage}[c]{0.3\textwidth}
		\centering
		\includegraphics[width=\textwidth]{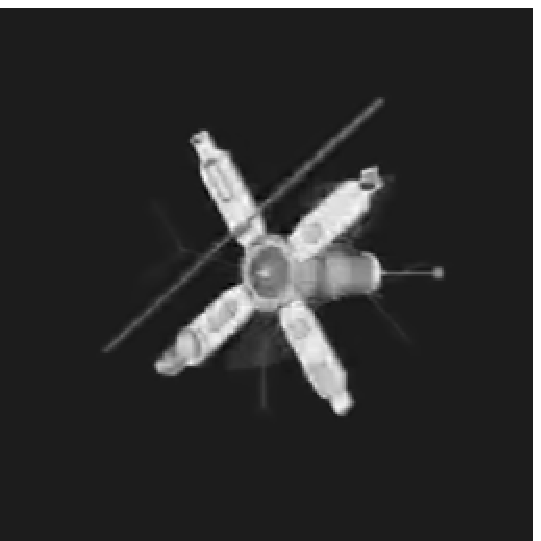}\\(c)
	\end{minipage}
	\caption{Satellite test problem reconstructions: (a) AIT-GP, (b) $\mbox{PISTA}_h$, (c)  $\mbox{Struct-PISTA}_h$.}
	\label{fig:satellite_rec}
\end{figure}
\begin{table}\label{tbl:compare}
	\begin{center}
		\caption{Comparison of the quality of the reconstructions for all considered examples. We highlight in boldface the best result.}
		\begin{tabular}{l|l|lll}
			Example & Method & RRE & PSNR & SSIM\\\hline
			\multirow{6}{*}{Cameraman} & AIT-GP& $0.111024$ & $24.7798$ & $0.729095$\\
			& ISTA& $0.090921$ & $26.5149$ & $0.763217$\\
			& $\mbox{PISTA}_h$& $0.096558$ & $25.9924$ & $0.790363$\\
			& $\mbox{PISTA}_\Lambda$& $0.094853$ & $26.1471$ & $0.795061$\\
			& $\mbox{Struct-PISTA}_h$& ${\bf 0.088796}$ & ${\bf 26.7203}$ & ${\bf 0.840145}$\\
			& $\mbox{Struct-PISTA}_\Lambda$& $0.090182$ & $26.5857$ & $0.834532$\\\hline
			\multirow{6}{*}{Grain} & AIT-GP& $0.183796$ & $25.9571$ & $0.731407$\\
			& ISTA& ${\bf 0.160655}$ & ${\bf 27.1259}$ & $0.845816$\\
			& $\mbox{PISTA}_h$& $0.195516$ & $25.4202$ & $0.737254$\\
			& $\mbox{PISTA}_\Lambda$& $0.181727$ & $26.0554$ & $0.748582$\\
			& $\mbox{Struct-PISTA}_h$& $0.161715$ & $27.0688$ & ${\bf 0.859284}$\\
			& $\mbox{Struct-PISTA}_\Lambda$& $0.168472$ & $26.7133$ & $0.830990$\\\hline
			\multirow{6}{*}{Satellite} & AIT-GP& $0.222783$ & $26.6708$ & $0.742416$\\
			& ISTA& $0.286179$ & $24.4956$ & $0.657111$\\
			& $\mbox{PISTA}_h$& $0.192146$ & $27.9558$ & $0.928584$\\
			& $\mbox{PISTA}_\Lambda$& $0.193730$ & $27.8844$ & $0.916993$\\
			& $\mbox{Struct-PISTA}_h$& ${\bf 0.187970}$ & ${\bf 28.1466}$ & ${\bf 0.934876}$\\
			& $\mbox{Struct-PISTA}_\Lambda$& $0.189147$ & $28.0924$ & $0.924931$\\\hline
		\end{tabular}
	\end{center}
\end{table}

\section{Conclusions}\label{sec:conclusions}
This work develops further and brings together all the techniques studied in \cite{HDC14,B,CDBH,DDEM,BBD}. The idea is to combine thresholding iterative methods, an approximate Tikhonov regularization scheme depending on a general (differential) operator and a structure preserving approach, with the main goal in mind to reduce the boundary artifacts which appear in the resulting de-blurred image when imposing artificial boundary conditions. The numerical results are promising and show improvements with respect to known state-of-the-art deblurring algorithms. There are still open problems, mainly concerning the theoretical assumptions and convergence proofs which will be furtherly investigated in future works.

\appendix

\section{Proofs}\label{appendix}
Hereafter we analyze Algorithm \ref{P=f(CC^*) circulant}, aiming to prove its convergence. Most of the following results are a collection of revised results that appeared in \cite{DH13,CDBH,B}. Since the proofs are very technical, we will present a full treatment leaving no details, in order to make this paper self-contained and easily readable.

Following up Section \ref{sec:def}, we need to set some more notations. Let us consider the singular value decomposition (SVD) of $C$ as the triple $\left(U,V,\Sigma\right)$ such that 
\begin{eqnarray*}
	&C\fb = U\Sigma V^* \fb, \\ 
	&U, V\in \mathcal{O}(m^2,\R), \qquad \Sigma= \textnormal{diag}_{j=1,\cdots,m^2}(\sigma_j) \mbox{ with } 0\leq \sigma_{m^2}\leq \cdots \leq \sigma_1,
\end{eqnarray*}
where $\mathcal{O}(m^2,\R)$ is the orthonormal group and $V^*$ is the adjoint of the operator $V$, i.e., $\langle V\fb^1, \fb^2\rangle = \langle \fb^1, V^*\fb^2\rangle$ for every pair $\fb^1, \fb^2 \in \R^{m^2}$. We will indicate the spectrum of $CC^*$ with
$$
\sigma(CC^*) = \{0\} \cup \bigcup_{j=1}^{m^2} \{\sigma_j^2\}.
$$

Hereafter, without loss of generality we will assume that 
$$
\|C\|=1\qquad \mbox{and} \qquad  \|h(CC^*)\|=\max_{\sigma^2 \in [0,1]} h(\sigma^2)=1.
$$
The first issue we have to consider is the existence of the sequence $\{\alpha_n\}$.

\begin{lemma}
	Let $\|\rb^n\|>\tau\delta$. Then for every fixed $n$ there exists $\alpha_n$ that satisfies \eqref{eq:Alg4_rev_7}. It can be computed by the following iteration 
	\begin{equation}\label{eq:Newton}
	\alpha_n^{k+1} := \frac{\left(\alpha_n^k\right)^2 \Phi'(\alpha_n^k)}{\alpha_n^k \Phi'(\alpha_n^k) + \Phi(\alpha_n^k) - q_n^2\|\rb^n\|},
	\end{equation}
	where
	\begin{align*}
	&\Phi(\alpha) := \| \alpha  (CC^* + \alpha h\left(CC^*\right))^{-1} \rb^n\|^2,\\
	&\Phi'(\alpha) := \| \sqrt{2\alpha}CC^* (CC^* + \alpha h\left(CC^*\right))^{-3/2} \rb^n\|^2.
	\end{align*}
	The convergence is locally quadratic. The existence of the regularization parameter $\alpha_n$ and the locally quadratic convergence of the algorithm above are independent and uniform with respect to the dimension $m^2$.   
\end{lemma}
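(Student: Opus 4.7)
The plan is to diagonalize via the SVD of $C$, reducing the vector equation \eqref{eq:Alg4_rev_7} to a scalar equation in $\alpha$ that can be attacked by the intermediate value theorem, and then to recognize \eqref{eq:Newton} as the standard Newton iteration after the change of variable $\beta := 1/\alpha$. The main obstacles I foresee are (i) checking that the stopping criterion implies $q_n < c$ so that the target value lies inside the range of the scalar function, and (ii) making the quadratic convergence estimate uniform in the dimension $m^2$.

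\textbf{Existence.} Writing $C = U\Sigma V^*$, the operators $CC^*$ and $h(CC^*)$ share the eigenbasis $U$. Setting $\tilde r_j := (U^* \rb^n)_j$, the square of the left-hand side of \eqref{eq:Alg4_rev_7} becomes the scalar function
\begin{equation*}
\Phi(\alpha) = \sum_{j=1}^{m^2} \frac{\alpha^2\, \tilde r_j^{\,2}}{\bigl(\sigma_j^2 + \alpha\, h(\sigma_j^2)\bigr)^2},
\end{equation*}
which is continuous on $[0,+\infty)$ with $\Phi(0)=0$ and, by the lower bound $h(\sigma^2) \le c_2$, satisfies $\lim_{\alpha\to+\infty}\Phi(\alpha) \ge \|\rb^n\|^2/c_2^2$. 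To apply the intermediate value theorem I need the target value $(q_n/c_1)^2\|\rb^n\|^2$ to lie strictly between these extremes; this reduces to $q_n < c_1/c_2 = c$. The stopping criterion $\|\rb^n\|>\tau\delta$ with $\tau = (1+2\rho)/(c-2\rho)$ gives $\tau_n > (1+2\rho)/(c-2\rho) > (1+\rho)/(c-2\rho)$, hence $q_n = \max\{q,\,2\rho+(1+\rho)/\tau_n\} < c$, as required. Uniqueness follows from strict monotonicity of $\Phi$ on $(0,+\infty)$ in the regime where $\rb^n$ has a nontrivial projection on $\Range{C}$ (the only one in which the loop is active).

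\textbf{Newton structure of \eqref{eq:Newton}.} Introduce $\beta := 1/\alpha$ and
\begin{equation*}
\tilde\Phi(\beta) := \Phi(1/\beta) = \sum_{j=1}^{m^2} \frac{\tilde r_j^{\,2}}{\bigl(\beta\sigma_j^2 + h(\sigma_j^2)\bigr)^2},
\end{equation*}
so that each summand is smooth, strictly decreasing and strictly convex on $[0,+\infty)$, and hence so is $\tilde\Phi$. Differentiating the change of variable yields $\tilde\Phi'(\beta) = -(\alpha_n^k)^2 \Phi'(\alpha_n^k)$ at $\beta = 1/\alpha_n^k$, and a direct algebraic manipulation shows that the standard Newton update $\beta^{k+1} = \beta^k - \bigl(\tilde\Phi(\beta^k) - q_n^2\|\rb^n\|\bigr)/\tilde\Phi'(\beta^k)$, rewritten in the variable $\alpha_n^k = 1/\beta^k$, coincides exactly with \eqref{eq:Newton}. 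Since $\tilde\Phi'(\beta^*)\neq 0$ at the root $\beta^* = 1/\alpha_n$, classical local Newton theory then delivers the locally quadratic convergence.

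\textbf{Uniformity in $m^2$.} The Newton error constant is controlled by $|\tilde\Phi''|/|\tilde\Phi'|$ at the root. Since $-\tilde\Phi'$ and $\tilde\Phi''$ are both positive sums with the same denominators $(\beta\sigma_j^2+h(\sigma_j^2))^4$, their ratio is a weighted average of terms of the form
\begin{equation*}
\frac{3\sigma_j^2}{\beta\sigma_j^2 + h(\sigma_j^2)} \le \frac{3\|C\|^2}{c_1} = \frac{3}{c_1},
\end{equation*}
a bound depending only on $c_1$ and on $\|C\|=1$, but neither on the dimension $m^2$ nor on the pattern of singular values. The existence gap $\|\rb^n\|^2/c_2^2 - (q_n/c_1)^2\|\rb^n\|^2$ is likewise controlled by $c_1,c_2,\rho$ and $q$ alone. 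This last step is the one I expect to be the most delicate: one must keep every estimate global and avoid introducing any quantity that could degenerate with the clustering of the $\sigma_j$ or with the growth of $m$.
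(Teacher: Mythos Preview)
Your proposal is correct and follows essentially the same route as the paper: diagonalize via the SVD, verify $q_n<c$ from the stopping criterion to get existence by the intermediate value theorem, then pass to the variable $\beta=1/\alpha$ (the paper writes $\gamma$) where the target function becomes convex and decreasing so that Newton converges monotonically and quadratically. Your explicit ratio bound $|\tilde\Phi''|/|\tilde\Phi'|\le 3/c_1$ is in fact cleaner than the paper's somewhat sketchier uniformity argument; the only slips are cosmetic (the two sums do not literally share ``the same denominators $(\cdot)^4$'', and the parenthetical ``the only regime in which the loop is active'' is asserted rather than proved---the paper handles the nondegeneracy $\tilde\Phi'(\beta^*)\neq 0$ by a short contradiction showing $CC^*\rb^n=0$ would force $q_n\ge c$).
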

\begin{proof}
	The existence of $\alpha$ is an easy consequence of the monotonicity of 
	$$
	\phi_\alpha(\sigma^2) = \alpha\left(\sigma^2 + \alpha h(\sigma^2)\right)^{-1}
	$$
	with respect to $\alpha$. Indeed, let us rewrite \eqref{eq:Alg4_rev_7} as follows
	\begin{align}\label{eq:Alg4_rev_7*}
	\alpha^2 \|(CC^* + \alpha h\left(CC^*\right))^{-1} \rb^n\|^2 &= \|\phi_\alpha (CC^*)\rb^n\|^2 \nonumber\\
	&= \int_{[0,1]}\phi^2_\alpha(\sigma^2) d\|\rb^n\|(\sigma) \nonumber\\
	&= \int_{[0,1]} \frac{\alpha^2 }{\left(\sigma^2 +\alpha h(\sigma^2)\right)^2} d\|\rb^n\|(\sigma) \nonumber\\
	& = \frac{q_n^2}{c_1^2} \int_{[0,1]}d\|\rb^n\|(\sigma),
	\end{align}
	where $d\| \rb^n\|(\cdot)$ is the discrete spectral measure associated to $\rb^n$ with respect to the SVD of $C$ and $\sigma \in \sigma(C)$ are the singular values of the spectrum of $C$. Since $\frac{d \phi_\alpha}{d\alpha} >0$ for every $\alpha\geq 0$, then by monotone convergence it holds that
	\begin{align*}
	\lim_{\alpha\to \infty} \int_{[0,1]} \frac{\alpha^2}{\left(\sigma^2 +\alpha h(\sigma^2)\right)^2} d\|\rb^n\|(\sigma) &= \int_{[0,1]} \lim_{\alpha\to \infty} \frac{\alpha^2 }{\left(\sigma^2 +\alpha h(\sigma^2)\right)^2} d\|\rb^n\|(\sigma)\\
	&\geq \|\rb^n\|^2 > \frac{q_n^2}{c_1^2}\|\rb^n\|^2.
	\end{align*}
	Indeed, it is not difficult to prove that $q_n/c_1 <1$ whenever $\rho \in (0,c_1/2)$ and $\|\rb^n\|> \tau\delta$, as assumed in the hypothesis. Since for $\alpha=0$ the left hand-side of \eqref{eq:Alg4_rev_7*} is zero, then we conclude that there exists an unique $\alpha_n >0$ such that equality holds in \eqref{eq:Alg4_rev_7}. Due to the generality of our proof and the fact that we could pass the limit under the sign of integral, the existence of such an $\alpha_n$ is granted uniformly with respect to the dimension $m^2$. 
	
	Since 
	$$
	\phi_\alpha(\sigma^2) = \alpha\left(\sigma^2 + \alpha h(\sigma^2)\right)^{-1} = \left(\alpha^{-1}\sigma^2 + h(\sigma^2)\right)^{-1},
	$$
	fixing $\gamma = \alpha^{-1}$, let us now define the following function
	$$
	\psi_\gamma(\sigma^2) = \left(\gamma\sigma^2 + h(\sigma^2)\right)^{-1}.
	$$
	Since
	\begin{equation}\label{eq:lem1}
	\frac{\partial\psi_\gamma^2 (\sigma^2)}{\partial_\gamma} = -2 \sigma^2\left(\gamma\sigma^2 + h(\sigma^2)\right)^{-3}, \qquad
	\frac{\partial^2\psi_\gamma^2 (\sigma^2)}{\partial_{\gamma^2}} = 6 \sigma^4\left(\gamma\sigma^2 + h(\sigma^2)\right)^{-4},
	\end{equation}
	then there exists two constants $d_1, d_2$ independents of $\gamma$ such that
	$$
	\left|\frac{\partial\psi_\gamma^2 (\sigma^2)}{\partial_\gamma}\right|\leq d_1, \qquad \left|\frac{\partial^2\psi_\gamma^2 (\sigma^2)}{\partial_{\gamma^2}}\right| \leq d_2,
	$$
	and in particular $d_1, d_2 \in \textnormal{L}^1([0,1], d\|\rb^n\|)$ for every $n$ and $m$. Therefore, if we define 
	$$
	\Psi(\gamma):= \|\psi_\gamma (CC^*)\rb^n\|^2,
	$$
	it holds that
	\begin{eqnarray}
	&\Psi'(\gamma) = \frac{\partial}{\partial_\gamma} \int_{[0,1]} \psi_\gamma^2(\sigma^2) d\|\rb^n\|(\sigma) = \int_{[0,1]}  \frac{\partial \psi_\gamma^2(\sigma^2)}{\partial_\gamma} d\|\rb^n\|(\sigma),\label{eq:lem2.1}\\
	&\Psi''(\gamma)=\frac{\partial}{\partial_\gamma} \int_{[0,1]} \frac{\partial \psi_\gamma^2(\sigma^2)}{\partial_\gamma} d\|\rb^n\|(\sigma) = \int_{[0,1]}  \frac{\partial^2 \psi_\gamma^2(\sigma^2)}{\partial_{\gamma^2}} d\|\rb^n\|(\sigma).\label{eq:lem2.2}
	\end{eqnarray}
	Then the Newton iteration applied to $\Psi(\gamma) = q_n^2\|\rb^n\|^2$ yields the iteration
	\begin{align*}
	\gamma_n^{k+1} &= \gamma_n^{k} + \frac{q_n^2\|\rb^n\|^2 - \Psi(\gamma_n^{k})}{\Psi'(\gamma_n^{k})}, \qquad k\geq 0.
	\end{align*}
	By \eqref{eq:lem1}, $\Psi(\gamma)$ is a decreasing convex function in $\gamma$. Since $\gamma_n= \lim_{k\to \infty}\gamma_n^{k+1} = 1/\alpha_n$, obviously we have that
	\begin{equation}\label{eq:lem3}
	\Psi(\gamma_n) = \frac{q_n^2}{c_1^2} \|\rb^n\|^2. 
	\end{equation}
	If 
	$$
	\Psi'(\gamma_n)  = -\|\sqrt{2}(CC^*)(\gamma_n CC^*+h(CC^*))^{-3/2} \rb^n\|^2=0,
	$$
	then necessarily we would have that $CC^*\rb^n =0$. Hence, $\left(\gamma_n CC^* +h(CC^*)\right)^{-1}\rb^n = h(CC^*)\rb^n$, and consequently 
	$$
	\Psi(\gamma_n)  = \| h(CC^*)\rb^n\|^2.
	$$
	From \eqref{eq:lem3} we would deduce that $q_n\geq c_1$, but this is absurd since as already observed above, $q_n<c_1$ if $\|\rb^n\|>\tau\delta$. Therefore $\Psi'(\gamma_n) \neq 0$ and by standard properties of the Newton iteration, $\gamma_n^{k}$ converges to the minimizer $\gamma_n$ from below and the convergence is locally quadratic. Finally, defining
	$$
	\Phi(\alpha) = \Psi(1/\alpha),
	$$
	then we get \eqref{eq:Newton}, $\alpha_n^{k}$ converges monotonically from above to $\alpha_n$ and the convergence is locally quadratic. Again, thanks to \eqref{eq:lem2.1} and \eqref{eq:lem2.2}, the rate of convergence is uniform with respect to the dimension of $\Y$.
\end{proof}

From now on, instead of working with the error $\eb^n _\delta = \xb - \xb^n _\delta$, in order to simplify the following proofs and notations, it is useful to consider the partial error with respect to $\zb^n _\delta$, namely
\begin{equation}\label{eq:tildee}
\tilde{\eb}^n _\delta = \xb - \zb^n _\delta.
\end{equation}
This will not affect the generality of our proofs, thanks to the continuity of $\S_\mu(\cdot)$ with respect to the noise level $\delta$.

\begin{proposition}\label{prop:Alg4_rev_1}
	Under the assumptions \eqref{eq:Alg4_rev_00}, if $\| \rb^n _\delta\| > \tau \delta$ and we define $\tau_n = \| \rb^n_\delta \|/ \delta$, then it follows that
	\begin{equation}\label{eq:Alg4_rev_10}
	\| \rb^n _\delta - B\tilde{\eb}^n _\delta \| \leq \left(  \rho + \frac{1+ 2\rho}{\tau_n}  \right) \| \rb^n _\delta \| 
	< (1-\rho) \| \rb^n _\delta \|,
	\end{equation}
	where $\tilde{\eb}^n$ is defined in \eqref{eq:tildee}.
\end{proposition}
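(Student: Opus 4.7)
The plan is to rewrite the residual minus the preconditioned image $\rb^n_\delta - B\tilde{\eb}^n_\delta$ as a sum of three terms whose norms can be controlled by the two parts of Assumption \ref{hp:1} and by the definition of $\tau_n$.

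First I would unwind the notation. Since $\gb^\delta = K\fb+\etab = A\xb+\etab$ (using $W^*W=I$), we have $\rb^n_\delta = A(\xb-\xb^n_\delta)+\etab$, while $B\tilde{\eb}^n_\delta = B(\xb-\zb^n_\delta)$. Subtracting and adding $B\xb^n_\delta = B\S_\mu(\zb^n_\delta)$ gives the clean decomposition
\begin{equation*}
\rb^n_\delta - B\tilde{\eb}^n_\delta \;=\; (A-B)(\xb-\xb^n_\delta) \;+\; \etab \;+\; B\bigl(\zb^n_\delta - \S_\mu(\zb^n_\delta)\bigr).
\end{equation*}

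Then I would estimate each piece in turn. For the first term, \eqref{eq:Alg4_rev_1} (the equivalent form of \eqref{eq:Alg4_rev_0}) yields $\|(A-B)(\xb-\xb^n_\delta)\| \le \rho\|A(\xb-\xb^n_\delta)\| = \rho\|\rb^n_\delta-\etab\| \le \rho(\|\rb^n_\delta\|+\delta)$. The noise term is bounded by $\|\etab\|=\delta$. For the last term, the componentwise definition of the soft-thresholding operator gives $\|\zb^n_\delta - \S_\mu(\zb^n_\delta)\|_\infty \le \mu$, so the operator norm $\vertiii{\cdot}$ defined in \eqref{def:norm} together with \eqref{eq:Alg4_rev_01} yields
\begin{equation*}
\bigl\|B(\zb^n_\delta - \S_\mu(\zb^n_\delta))\bigr\| \;\le\; \vertiii{B}\,\|\zb^n_\delta - \S_\mu(\zb^n_\delta)\|_\infty \;\le\; \vertiii{B}\,\mu \;\le\; \rho\delta.
\end{equation*}
Summing the three bounds gives $\|\rb^n_\delta - B\tilde{\eb}^n_\delta\| \le \rho\|\rb^n_\delta\| + (1+2\rho)\delta$, and dividing by $\|\rb^n_\delta\|$ and substituting $\tau_n = \|\rb^n_\delta\|/\delta$ produces the first inequality of \eqref{eq:Alg4_rev_10}.

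For the strict inequality $(\rho + (1+2\rho)/\tau_n)\|\rb^n_\delta\| < (1-\rho)\|\rb^n_\delta\|$, I would simply rearrange: it is equivalent to $\tau_n > (1+2\rho)/(1-2\rho) = \tau$, which holds by the standing hypothesis $\|\rb^n_\delta\| > \tau\delta$. Note this uses $\rho < 1/2$, which is guaranteed because $\rho < c/2$ and $c = c_1/c_2 \le 1$ under the normalization $\|h(CC^*)\|=1$ and $0<c_1\le h(\sigma^2)\le c_2$.

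The main conceptual point — and the only nontrivial modeling step — is spotting the right add-and-subtract $B\xb^n_\delta$ that isolates the three error sources (the approximation error $A-B$, the data noise $\etab$, and the thresholding defect $\zb-\S_\mu(\zb)$) so that each hypothesis of Assumption \ref{hp:1} kicks in on exactly one term. Once the decomposition is written down, the remaining estimates are one-line applications of \eqref{eq:Alg4_rev_1}, $\|\etab\|=\delta$, and the operator norm bound. The second inequality is then purely algebraic.
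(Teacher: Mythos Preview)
Your proof is correct and follows essentially the same route as the paper's: the same add-and-subtract of $B\xb^n_\delta=B\S_\mu(\zb^n_\delta)$ yielding the three-term decomposition into noise, approximation error $(A-B)\eb^n_\delta$, and thresholding defect, followed by the same termwise bounds via \eqref{eq:Alg4_rev_1}, $\|\etab\|\le\delta$, and \eqref{eq:Alg4_rev_01}. Your treatment is in fact slightly more explicit than the paper's in two places: you spell out why $\|B(\zb^n_\delta-\S_\mu(\zb^n_\delta))\|\le\rho\delta$ via the $\|\cdot\|_\infty$ bound and the norm $\vertiii{\cdot}$, and you remark that the strict inequality needs $\rho<1/2$, which indeed follows from $\rho<c/2$ with $c=c_1/c_2\le 1$.
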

\begin{proof}
	%See \cite[Proposition 6]{CDBH}.
	In the free noise case we have $\gb = K\xb$. As a consequence
	\begin{align*}
	\rb^n _\delta - B\tilde{\eb}^n _\delta & = \gb^\delta - K \xb^n _\delta -  B(\xb - \zb^n _\delta) + B \xb^n _\delta - B S_\mu(\zb^n _\delta) \\
	& = \gb^\delta - \gb + (K - B)\eb^n _\delta + B(\zb^n _\delta-S_\mu(\zb^n _\delta)).
	\end{align*}
	Using now assumptions \eqref{eq:Alg4_rev_00}, in particular 
	\eqref{eq:Alg4_rev_1}, and $\| \gb^\delta - \gb \| \leq \delta$, we derive the following estimate
	\begin{align*}
	\| \rb^n _\delta - B\tilde{\eb}^n _\delta \| &\leq  \| \gb^\delta - \gb \| + \| (K - B)\eb^n _\delta  \| + \|B(\zb^n _\delta-S_\mu(\zb^n _\delta))\|\\
	&\leq  \| \gb^\delta - \gb \| + \rho \| K\eb^n _\delta \| +\rho\delta\\
	& \leq  \| \gb^\delta - \gb \| + \rho (\|\rb^n _\delta \| + \| \gb^\delta - \gb \|  + \delta)\\
	&\leq (1 + 2\rho) \delta + \rho \|\rb^n _\delta \|.
	\end{align*}
	The first inequality in \eqref{eq:Alg4_rev_10} now follows from the hypothesis $\delta = \|\rb^n _\delta \|/ \tau_n$.
	The second inequality  follows from $ \rho + \frac{1+ 2\rho}{\tau_n} < \rho + \frac{1+ 2\rho}{\tau}$.
\end{proof}

Combining the preceding proposition with \eqref{eq:Alg4_rev_7}, we are going to show that the sequence $\|\tilde{\eb}^n _\delta\|$ is monotonically decreasing. We have the following result.

\begin{proposition}\label{prop:Alg4_rev_2}
	Let $\tilde{\eb}^n _\delta$ be defined in \eqref{eq:tildee}. 
	If the assumptions \eqref{eq:Alg4_rev_00} are satisfied, then $\|\tilde{\eb}^n _\delta\|$ of Algorithm \ref{P=f(CC^*) circulant} decreases monotonically for $n=0,1,\dots,n_\delta-1$. In particular, %if we set $\kappa := \|f^{-1/2}(CC^*)\|\cdot\|f^{1/2}(CC^*) \|$,
	we deduce
	%\begin{equation}\label{eq:Alg4_rev_6}
	%\| \tilde{\eb}^n _\delta \|^2 - \| \tilde{\eb}^{n+1} _\delta \|^2 \geq \frac{2\left[(2+2\kappa)\rho^2 + (1-\kappa)\rho + (1-\kappa)\right]}{1 + 2\rho} \| (CC^* + \alpha_n f\left(CC^*\right) )^{-1} \rb^n _\delta \|\|f\left(CC^*\right)^{-1/2}\rb^n  _\delta\|>0.
	%	\end{equation}
	
	\begin{equation}\label{eq:Alg4_rev_6}
	\|\tilde{\eb}^n _\delta \|^2 - \| \tilde{\eb}^{n+1} _\delta \|^2 \geq \frac{8\rho^2}{1 + 2\rho} \| (CC^* + \alpha_n h\left(CC^*\right) )^{-1} \rb^n _\delta \|\|\rb^n  _\delta\|>0.
	\end{equation}
\end{proposition}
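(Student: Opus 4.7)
My approach starts with the observation that $\zb^{n+1}_\delta = \zb^n_\delta + \hb^n$, so $\tilde{\eb}^{n+1}_\delta = \tilde{\eb}^n_\delta - \hb^n$ and
\[
\|\tilde{\eb}^n_\delta\|^2 - \|\tilde{\eb}^{n+1}_\delta\|^2 = 2\langle \tilde{\eb}^n_\delta, \hb^n\rangle - \|\hb^n\|^2.
\]
Introducing $\vb^n := (CC^* + \alpha_n h(CC^*))^{-1}\rb^n_\delta$, so that $\hb^n = WC^*\vb^n$, I would use $W^*W = I$ to translate both pieces into $\R^{m^2}$, obtaining $\|\hb^n\|^2 = \langle CC^*\vb^n, \vb^n\rangle$ and $\langle \tilde{\eb}^n_\delta, \hb^n\rangle = \langle B\tilde{\eb}^n_\delta, \vb^n\rangle$.

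The key algebraic step is to insert $B\tilde{\eb}^n_\delta = \rb^n_\delta - (\rb^n_\delta - B\tilde{\eb}^n_\delta)$ and expand $\langle \rb^n_\delta, \vb^n\rangle$ through the identity $\rb^n_\delta = (CC^* + \alpha_n h(CC^*))\vb^n$, which yields $\langle \rb^n_\delta, \vb^n\rangle = \langle CC^*\vb^n, \vb^n\rangle + \alpha_n\langle h(CC^*)\vb^n, \vb^n\rangle$. After substituting and letting one copy of $\langle CC^*\vb^n, \vb^n\rangle$ cancel against $\|\hb^n\|^2$, the clean identity
\[
\|\tilde{\eb}^n_\delta\|^2 - \|\tilde{\eb}^{n+1}_\delta\|^2 = \langle CC^*\vb^n, \vb^n\rangle + 2\alpha_n\langle h(CC^*)\vb^n, \vb^n\rangle - 2\langle \rb^n_\delta - B\tilde{\eb}^n_\delta, \vb^n\rangle
\]
emerges, in which the first two summands are manifestly non-negative. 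For the middle summand I will use $h(CC^*)\geq c_1 I$ combined with the defining equation \eqref{eq:Alg4_rev_7}, namely $\alpha_n\|\vb^n\| = (q_n/c_1)\|\rb^n_\delta\|$, to deduce $2\alpha_n\langle h(CC^*)\vb^n, \vb^n\rangle \geq 2q_n\|\rb^n_\delta\|\|\vb^n\|$, and Cauchy--Schwarz together with Proposition \ref{prop:Alg4_rev_1} controls the last term in absolute value by $2(\rho + (1+2\rho)/\tau_n)\|\rb^n_\delta\|\|\vb^n\|$.

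Discarding the non-negative $\langle CC^*\vb^n, \vb^n\rangle$, the estimates above reduce the whole claim to the scalar inequality $q_n - \rho - (1+2\rho)/\tau_n \geq 4\rho^2/(1+2\rho)$. The algorithm's prescription $q_n \geq 2\rho + (1+\rho)/\tau_n$ collapses this bracket to $\rho(1 - 1/\tau_n)$, and the while-loop condition $\tau_n > \tau = (1+2\rho)/(c - 2\rho)$ together with $c = c_1/c_2 \leq 1$ forces $1 - 1/\tau_n > 4\rho/(1+2\rho)$; multiplying produces precisely $8\rho^2/(1+2\rho)$ and, in particular, strict positivity of the gap. The hard part I expect is the algebraic rearrangement just described: a naive expansion of $2\langle \tilde{\eb}^n_\delta, \hb^n\rangle - \|\hb^n\|^2$ leaves behind the cross term $\langle \rb^n_\delta, \vb^n\rangle$ which admits no useful lower bound uniform in $\alpha_n$, whereas the resolvent identity $CC^*(CC^* + \alpha_n h(CC^*))^{-1} = I - \alpha_n h(CC^*)(CC^* + \alpha_n h(CC^*))^{-1}$ converts that contribution precisely into the coercive piece $\alpha_n\langle h(CC^*)\vb^n, \vb^n\rangle$, on which the spectral lower bound $c_1$ bites.
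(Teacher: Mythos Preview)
Your proof is correct and follows essentially the same route as the paper's: both start from $2\langle \tilde{\eb}^n_\delta,\hb^n\rangle-\|\hb^n\|^2$, pass to $B\tilde{\eb}^n_\delta$ via $WC^*=B^*$, insert $\rb^n_\delta$ to isolate $\rb^n_\delta-B\tilde{\eb}^n_\delta$, use the resolvent identity to extract the coercive piece $\alpha_n h(CC^*)(CC^*+\alpha_n h(CC^*))^{-2}$, and then combine the spectral lower bound $h\geq c_1$, equation~\eqref{eq:Alg4_rev_7}, Proposition~\ref{prop:Alg4_rev_1}, and the scalar estimate $q_n-\rho-(1+2\rho)/\tau_n\geq \rho(1-1/\tau_n)>4\rho^2/(1+2\rho)$. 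The only cosmetic difference is that you keep an exact identity (retaining the non-negative term $\langle CC^*\vb^n,\vb^n\rangle$) until the very end, whereas the paper passes to an inequality one step earlier by replacing $-\langle CC^*\vb^n,\vb^n\rangle$ with $-2\langle CC^*\vb^n,\vb^n\rangle$; your bookkeeping is marginally cleaner, but the argument is the same.
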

\begin{proof}
	Recalling that $WC^*=B^*$ and that $BB^*=CC^*$, we have
	\begin{align*}
	\| \tilde{\eb}^n _\delta \|^2 - \| \tilde{\eb}^{n+1} _\delta \|^2 &= 2\langle \tilde{\eb}^n _\delta, \hb^n \rangle - \| \hb^n \|^2\\
	&= 2\langle B\tilde{\eb}^n _\delta,(CC^* + \alpha_n h\left(CC^*\right))^{-1} \rb^n _\delta \rangle - \langle  \rb^n _\delta, CC^* (CC^* + \alpha_n h\left(CC^*\right))^{-2}  \rb^n _\delta \rangle \\
	&= 2 \langle  \rb^n _\delta,(CC^* + \alpha_n h\left(CC^*\right))^{-1}  \rb^n _\delta \rangle -   \langle  \rb^n _\delta, CC^*(CC^* + \alpha_n h\left(CC^*\right))^{-2}  \rb^n _\delta \rangle \\
	&- 2 \langle  \rb^n _\delta - B\tilde{\eb}^n _\delta,(CC^* + \alpha_n h\left(CC^*\right))^{-1}  \rb^n _\delta \rangle\\
	&\geq 2 \langle  \rb^n _\delta, (CC^* + \alpha_n h\left(CC^*\right))^{-1}  \rb^n _\delta \rangle - 2  \langle  \rb^n _\delta, CC^*(CC^* + \alpha_n h\left(CC^*\right))^{-2}  \rb^n _\delta \rangle \\
	&- 2 \langle  \rb^n _\delta - B\tilde{\eb}^n _\delta, (CC^* + \alpha_n h\left(CC^*\right))^{-1}  \rb^n _\delta \rangle\\
	&= 2 \alpha_n \langle  \rb^n _\delta, h\left(CC^*\right)(CC^* + \alpha_n h\left(CC^*\right))^{-2}  \rb^n _\delta \rangle \\
	&- 2 \langle \rb^n _\delta - B\tilde{\eb}^n _\delta, (CC^* + \alpha_n h\left(CC^*\right))^{-1}  \rb^n _\delta \rangle\\
	&\geq  2 \alpha_n \langle  \rb^n _\delta, h\left(CC^*\right) (CC^* + \alpha_n h\left(CC^*\right))^{-2}  \rb^n _\delta \rangle \\
	&- 2\|  \rb^n _\delta - B\tilde{\eb}^n _\delta \| \| (CC^* + \alpha_n h\left(CC^*\right))^{-1}  \rb^n _\delta \|\\
	&\geq 2 \| (CC^* + \alpha_n h\left(CC^*\right))^{-1}  \rb^n _\delta \| \left( \| c_1\alpha_n  (CC^* + \alpha_n h\left(CC^*\right))^{-1}  \rb^n _\delta\|\right.\\
	& - \left. \| \rb^n _\delta - B\tilde{\eb}^n _\delta \|  \right)\\
	&\geq  2 \|(CC^* + \alpha_n h\left(CC^*\right))^{-1}  \rb^n _\delta \|\cdot \left( q_n\|\rb^n _\delta\|  - \left( \rho + \frac{1+ 2\rho}{\tau_n}  \right)\|\rb^n _\delta\|  \right)\\
	&\geq \frac{8\rho^2}{1 + 2\rho}  \| (CC^* + \alpha_n h\left(CC^*\right))^{-1}  \rb^n _\delta \|\|\rb^n _\delta\|>0,
	\end{align*}
	where the relevant inequalities are a consequence of equation \eqref{eq:Alg4_rev_7} and Proposition~\ref{prop:Alg4_rev_1}. 
	The last inequality follows from \eqref{eq:Alg4_rev_7} and $\tau_n > \tau=(1+2\rho)/(1-2\rho)$ for $\| \rb^n _\delta \| > \tau \delta$. 
\end{proof}

\begin{corollary}\label{cor:Alg4_rev_1}
	Under the assumptions \eqref{eq:Alg4_rev_00}, there holds
	\begin{align}\label{eq:Alg4_rev_11}
	\| \tilde{\eb}^0 _\delta \| &\geq  \frac{8\rho^2}{1 + 2\rho}\sum_{n=0}^{n_\delta -1}  \| (CC^* + \alpha_n h\left(CC^*\right))^{-1}  \rb^n _\delta \|\| \rb^n _\delta\| \\
	&\geq c  \sum_{n=0}^{n_\delta -1}\| \rb^n _\delta\|^2 \label{eq:Alg4_rev_12}
	\end{align}
	for some constant $c >0$, depending only on $\rho$ and $q$ in \eqref{eq:Alg4_rev_7}.
	%of \eqref{eq:Alg4_rev_1} and on the parameter $q$ of Algorithm~4.
\end{corollary}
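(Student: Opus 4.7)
The plan is to combine the one-step decrease estimate from Proposition~\ref{prop:Alg4_rev_2} with a uniform upper bound on the regularization parameters $\alpha_n$ coming from equation~\eqref{eq:Alg4_rev_7}.

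First I would obtain the inequality \eqref{eq:Alg4_rev_11} by summing the telescoping estimate \eqref{eq:Alg4_rev_6} over $n=0,1,\dots,n_\delta-1$. Writing
\[
\sum_{n=0}^{n_\delta-1}\bigl(\|\tilde{\eb}^n_\delta\|^2-\|\tilde{\eb}^{n+1}_\delta\|^2\bigr)=\|\tilde{\eb}^0_\delta\|^2-\|\tilde{\eb}^{n_\delta}_\delta\|^2\leq\|\tilde{\eb}^0_\delta\|^2,
\]
the right hand sides of \eqref{eq:Alg4_rev_6} pile up to produce exactly the geometric series appearing in \eqref{eq:Alg4_rev_11}. (The displayed left hand side $\|\tilde{\eb}^0_\delta\|$ in the statement should be read as $\|\tilde{\eb}^0_\delta\|^2$, as is standard in telescoping energy arguments of this type.)

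Next, for \eqref{eq:Alg4_rev_12} I would replace the factor $\|(CC^*+\alpha_n h(CC^*))^{-1}\rb^n_\delta\|$ by a lower bound of the form $c'\|\rb^n_\delta\|$, with $c'>0$ independent of $n$ and $\delta$. The key observation is that equation \eqref{eq:Alg4_rev_7} gives the identity
\[
\bigl\|(CC^*+\alpha_n h(CC^*))^{-1}\rb^n_\delta\bigr\|=\frac{q_n}{c_1\,\alpha_n}\,\|\rb^n_\delta\|,
\]
so what I really need is a uniform upper bound $\alpha_n\leq\alpha^\ast$. To get this I would use the normalization $\|C\|=\|h(CC^*)\|=1$ and $h(\sigma^2)\leq c_2$ to conclude $\|CC^*+\alpha_n h(CC^*)\|\leq 1+\alpha_n c_2$, so that
\[
\bigl\|(CC^*+\alpha_n h(CC^*))^{-1}\rb^n_\delta\bigr\|\geq\frac{1}{1+\alpha_n c_2}\,\|\rb^n_\delta\|.
\]
Plugging this into \eqref{eq:Alg4_rev_7} yields $\alpha_n/(1+\alpha_n c_2)\leq q_n/c_1$, which rearranges to $\alpha_n\leq (q_n/c_1)/(1-q_n/c)$.

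Finally I would observe that the sequence $q_n=\max\{q,\,2\rho+(1+\rho)/\tau_n\}$ is bounded above by some $q^\ast<c$ depending only on $\rho$ and $q$, because $\tau_n>\tau=(1+2\rho)/(1-2\rho)$ as long as the stopping criterion has not been triggered, and $q<c$ by construction. This yields a uniform bound $\alpha_n\leq\alpha^\ast(\rho,q)$, hence $\|(CC^*+\alpha_n h(CC^*))^{-1}\rb^n_\delta\|\geq (1+\alpha^\ast c_2)^{-1}\|\rb^n_\delta\|$, and \eqref{eq:Alg4_rev_12} follows with $c=\frac{8\rho^2}{(1+2\rho)(1+\alpha^\ast c_2)}>0$. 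The only delicate point is to verify that the bound on $q_n$ can indeed be taken uniformly in $n$ (and in the noise level $\delta$), which is where the constraint $\rho<c/2$ and the definition of the stopping threshold $\tau$ are essential; apart from that, the argument is a straightforward manipulation of \eqref{eq:Alg4_rev_6} and \eqref{eq:Alg4_rev_7}.
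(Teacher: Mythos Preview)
Your proof is correct and follows essentially the same route as the paper: telescope \eqref{eq:Alg4_rev_6} for \eqref{eq:Alg4_rev_11}, then combine the identity from \eqref{eq:Alg4_rev_7} with an upper bound on $\alpha_n$ coming from the elementary estimate $\sigma^2+\alpha h(\sigma^2)\le 1+\alpha c_2$ (the paper argues pointwise on eigenvalues, you via the operator norm, which is equivalent here since the matrix is self-adjoint), and finally use $\tau_n>\tau$ and $q<c$ to bound $q_n$ uniformly below $c$. One minor slip: in Algorithm~\ref{P=f(CC^*) circulant} the stopping threshold is $\tau=\frac{1+2\rho}{c-2\rho}$, not $\frac{1+2\rho}{1-2\rho}$; with this correction your verification that $2\rho+(1+\rho)/\tau_n<c$ goes through unchanged.
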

\begin{proof}
The first inequality follows by taking the sum of the quantities in \eqref{eq:Alg4_rev_6} from $n=0$ up to $n= n_\delta -1$. 
	
	For the second inequality, note that for every 
	$$
	\alpha > \frac{q_n  }{c_1 - q_n}
	$$
	and every $\sigma \in \sigma(C) \subset [0,1]$, we have
	$$
	\frac{\alpha}{\sigma^2 + \alpha h(\sigma^2)} \geq \frac{\alpha}{1 + \alpha} = (1 + 1/\alpha)^{-1} > \frac{q_n}{c_1},
	$$
	and hence,
	$$
	\alpha \|(CC^* + \alpha h\left(CC^*\right))^{-1} \rb^n _\delta \| > \frac{q_n}{c_1} \|  \rb^n _\delta \|,
	$$
	as $\|  \rb^n _\delta \|>0$ for $n < n_\delta$. This implies that  $\alpha_n$ in \eqref{eq:Alg4_rev_7} satisfies $0~< ~\alpha_n ~\leq ~\frac{q_n }{c_1 - q_n}$, thus
	$$
	\|(CC^* + \alpha_n h\left(CC^*\right))^{-1}  \rb^n _\delta \| = \frac{q_n}{c_1\alpha_n} \|\rb^n _\delta \| \geq (c_1 - q_n) \| \rb^n _\delta \|.
	$$
	According to the choice of parameters in Algorithm \ref{P=f(CC^*) circulant}, we deduce 
	$$
	c_1 - q_n = \min \{c_1 - q, c_1 - 2\rho - (1 + \rho)/\tau_n   \},
	$$
	and 
	$$
	c_1 - 2\rho - (1 + \rho)/\tau_n = \frac{1 + 2\rho}{\tau} - \frac{1 + \rho}{\tau_n} > \frac{1 + 2\rho}{\tau} - \frac{1 + \rho}{\tau} = \frac{\rho}{\tau}.
	$$
	Therefore, there exists $c >0$, depending only on $\rho$ and $q$ such that 
	$$
	c_1 - q_n \geq c  \left(\frac{8\rho^2}{1 + 2\rho}\right)^{-1},
	$$
	and
	\begin{align*}
	\| (CC^* + \alpha_n h\left(CC^*\right))^{-1}  \rb^n _\delta \| \geq 
	 c  \left(\frac{8\rho^2}{1 + 2\rho}\right)^{-1} \|  \rb^n _\delta \| \qquad \mbox{for } n= 0, 1, \cdots, n_\delta -1.
	\end{align*}
	Now the second inequality follows immediately.
\end{proof}

From \eqref{eq:Alg4_rev_12} it can be seen that the sum of the squares of the residual norms is bounded, and hence, if $\delta >0$, there must be a first integer $n_\delta < \infty$ such that \eqref{eq:Alg4_rev_12} is fulfilled, i.e., Algorithm \ref{P=f(CC^*) circulant} terminates after finitely many iterations. 

%	\begin{remark}\label{remark:Alg4_rev_2}
%		Recalling that %, according to the assumption \eqref{eq:Alg4_rev_01}, 
%		the soft-threshold parameter $\mu$ is taken as a continuous function with respect to the noise level $\delta$ such that $\mu (\delta) \to 0$ as $\delta \to 0 $, then the operator $\gb \mapsto \zb^n$ is continuous for every fixed $n$.
%	\end{remark}

Finally, we are ready to prove a convergence and regularity result.
\begin{theorem}
	Assume that $\zb^0$ is not a solution of the linear system
	\begin{equation}\label{eq:O}
	\gb = A W^* \xb,
	\end{equation}
	and that $\delta_m$ is a sequence of positive real numbers such that $\delta_k \to 0$ as $k \to \infty$. Then, if Assumption \ref{hp:1} is valid, the sequence $\{ \xb^{n(\delta_k)} _{\delta_k}  \}_{k \in \mathbb{N}}$, generated by the discrepancy principle rule \eqref{eq:Alg4_rev_12}, converges as $k \to \infty$ to the solution of \eqref{eq:O} which is closest to $\zb ^0$ in Euclidean norm.
\end{theorem}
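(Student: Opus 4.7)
I would follow the two-step paradigm developed in \cite{DH13,CDBH,B} for nonstationary iterative regularization: first establish convergence for exact data ($\delta=0$), then transfer the result to the noisy regime via a stability estimate combined with a diagonal argument driven by the discrepancy stopping rule.

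\emph{Noise-free analysis.} Setting $\delta=0$ forces $\mu(0)=0$ by \eqref{eq:Alg4_rev_01}, so $\S_\mu=I$ and $\xb^n=\zb^n$ along the whole orbit. Let $\bar\zb^n$, $\bar\rb^n$ denote the iterates and residuals produced by Algorithm \ref{P=f(CC^*) circulant} applied to the exact datum $\gb$. Proposition \ref{prop:Alg4_rev_2} shows that $\|\xb^\dagger-\bar\zb^n\|$ is monotonically non-increasing and Corollary \ref{cor:Alg4_rev_1} yields $\sum_{n\ge 0}\|\bar\rb^n\|^2<\infty$, hence $A\bar\zb^n\to\gb$. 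Each increment $\hb^k=WC^*(CC^*+\alpha_k h(CC^*))^{-1}\bar\rb^k$ lies in $\Range{WC^*}\subseteq\Range{B^*}$, so $\bar\zb^n-\zb^0\in\Range{B^*}$, and the spectral equivalence \eqref{eq:2c'} coming from Assumption \ref{hp:1} gives $\ker(B)=\ker(A)$, whence $\Range{B^*}=\Range{A^*}$. I would then upgrade the telescoping identity that underlies Proposition \ref{prop:Alg4_rev_2} into a full Cauchy estimate of the form
\begin{equation*}
\|\bar\zb^m-\bar\zb^n\|^2\le \|\xb^\dagger-\bar\zb^n\|^2-\|\xb^\dagger-\bar\zb^m\|^2+2\sum_{k=n}^{m-1}\bigl|\langle \bar\rb^k-B\tilde{\bar\eb}^k,\,(CC^*+\alpha_k h(CC^*))^{-1}\bar\rb^k\rangle\bigr|,
\end{equation*}
whose cross terms are controlled by Proposition \ref{prop:Alg4_rev_1} combined with the summability in Corollary \ref{cor:Alg4_rev_1}. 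This forces $\{\bar\zb^n\}$ to be Cauchy with a limit $\bar\zb^\ast$ satisfying $A\bar\zb^\ast=\gb$ and $\bar\zb^\ast-\zb^0\in\Range{A^*}=\ker(A)^\perp$. These two conditions uniquely characterize $\bar\zb^\ast$ as the solution of \eqref{eq:O} closest to $\zb^0$ in Euclidean norm.

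\emph{From noise-free to noisy.} For each fixed $n$ the map $\gb^\delta\mapsto\zb^n_\delta$ is continuous: the parameter $\alpha_n$ selected in \eqref{eq:Alg4_rev_7} depends continuously on $\gb^\delta$ by the strict monotonicity and uniform local quadratic convergence of the Newton iteration proved in the preceding lemma, all other operations are bounded linear, and $\S_\mu$ is globally $1$-Lipschitz with $\mu(\delta)\to 0$. Hence $\xb^n_{\delta_k}=\S_{\mu(\delta_k)}(\zb^n_{\delta_k})\to\bar\zb^n$ for every fixed $n$ as $\delta_k\to 0$. Since $\zb^0$ is not a solution of \eqref{eq:O}, $\|\bar\rb^0\|>0$, so the stopping index $n_k:=n(\delta_k)$ must tend to $\infty$ as $\tau\delta_k\to 0$. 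Given $\varepsilon>0$, I would pick $N$ large so that $\|\bar\zb^N-\bar\zb^\ast\|<\varepsilon/3$, then use the monotonicity of Proposition \ref{prop:Alg4_rev_2} along the noisy orbit to bound $\|\xb^{n_k}_{\delta_k}-\bar\zb^\ast\|$ in terms of $\|\xb^N_{\delta_k}-\bar\zb^\ast\|$ plus an $o(1)$ correction for $k$ large, and finally invoke the fixed-$n$ continuity to conclude $\|\xb^{n_k}_{\delta_k}-\bar\zb^\ast\|<\varepsilon$ eventually.

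\emph{Expected main obstacle.} The technical heart of the argument is the Cauchy estimate in the noise-free step. The iteration is genuinely nonstationary because $\alpha_n$ depends on $\bar\rb^n$, and the non-commuting operators $CC^*$ and $h(CC^*)$ enter simultaneously, so the clean telescoping identities available for stationary Landweber-type schemes need careful adaptation; in particular the contribution $c_1 \le h(\sigma^2)\le c_2$ must be threaded through the cross-term bounds to avoid losing the margin $c_1-q_n>0$ exploited in Corollary \ref{cor:Alg4_rev_1}. Once this estimate is secured, the passage to the noisy setting via interpolation between the $\bar\zb^n$-orbit and the $\zb^n_{\delta_k}$-orbit through the discrepancy stopping rule is standard and mirrors the corresponding arguments in \cite{DH13,CDBH,B}.
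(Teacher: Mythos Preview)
Your plan follows the paper's two-step paradigm almost exactly: noise-free Cauchy analysis built on Propositions~\ref{prop:Alg4_rev_1}--\ref{prop:Alg4_rev_2} and Corollary~\ref{cor:Alg4_rev_1}, followed by a fixed-$n$ continuity argument combined with the monotonicity of $\|\tilde{\eb}^n_\delta\|$ for the noisy regime. Step~2 and the observation that $\mu(0)=0$ forces $\S_\mu=I$ match the paper essentially line by line.

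The one genuine gap is the displayed Cauchy estimate in Step~1. The inequality you write, with cross terms $\bigl|\langle \rb^k-B\tilde{\eb}^k,(CC^*+\alpha_k h(CC^*))^{-1}\rb^k\rangle\bigr|$ indexed by the running $k$, does not follow from telescoping the identity $\|\tilde{\eb}^k\|^2-\|\tilde{\eb}^{k+1}\|^2=2\langle\tilde{\eb}^k,\hb^k\rangle-\|\hb^k\|^2$: summing this over $k$ controls $\sum_k\|\hb^k\|^2$, not $\bigl\|\sum_k\hb^k\bigr\|^2=\|\zb^m-\zb^n\|^2$, and the off-diagonal pairings $\langle\hb^j,\hb^k\rangle$ for $j\ne k$ are not absorbed by the terms you list. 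The paper instead uses the standard three-index device (of Hanke--Neubauer--Scherzer type): for $n>l>k$ it expands
\[
\|\zb^n-\zb^l\|^2=\|\tilde{\eb}^n\|^2-\|\tilde{\eb}^l\|^2+2\sum_{i=l}^{n-1}\bigl\langle B\tilde{\eb}^l,(CC^*+\alpha_i h(CC^*))^{-1}\rb^i\bigr\rangle,
\]
bounds $\|B\tilde{\eb}^l\|\le(1+\rho)\|\rb^l\|$ via \eqref{eq:2c'}, performs the analogous computation for $\|\zb^l-\zb^k\|^2$, combines them through $\|\zb^n-\zb^k\|^2\le 2\|\zb^n-\zb^l\|^2+2\|\zb^l-\zb^k\|^2$, and only then chooses the intermediate index $l$ so that $\|\rb^l\|=\min_{k<i<n}\|\rb^i\|$. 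This choice makes every summand dominated by $\|\rb^i\|\,\|(CC^*+\alpha_i h(CC^*))^{-1}\rb^i\|$, hence by a tail of the convergent series in \eqref{eq:Alg4_rev_11}. The intermediate index with minimal residual is the missing ingredient in your outline; once you insert it, the rest of your plan goes through unchanged.

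In the other direction, your range argument identifying the limit as the solution closest to $\zb^0$---via $\hb^k\in\Range{B^*}$ and $\ker(B)=\ker(A)$ from \eqref{eq:2c'}, hence $\bar\zb^\ast-\zb^0\in\ker(A)^\perp$---is more explicit than what the paper supplies: the paper's Step~1 only establishes that $\{\zb^n\}$ is Cauchy, and the minimum-norm characterization stated in the theorem is not argued there.
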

\begin{proof}
	We are going to show convergence for the sequence $\{ \zb^{n(\delta_k)} _{\delta_k}  \}_{k \in \mathbb{N}}$ and then the thesis will follow easily from the continuity of $S_{\mu(\delta)}$, i.e.,
	$$
	\lim_{k\to \infty} \xb^{n(\delta_k)} _{\delta_k} = \lim_{k\to \infty}S_{\mu(\delta_k)} (\zb^{n(\delta_k)} _{\delta_k}) =  S_{\lim_{k\to \infty}\mu(\delta_k)} (\lim_{k\to \infty}\zb^{n(\delta_k)} _{\delta_k}) = \lim_{k\to \infty}\zb^{n(\delta_k)} _{\delta_k}.
	$$
	The proof of the convergence for the sequence $\{ \zb^{n(\delta_k)} _{\delta_k}  \}$ can be divided into two steps: at step one, we show the convergence in the free noise case $\delta =0$. In particular, the sequence $\{ \zb^n \}$ converges to a solution of \eqref{eq:O} that is the closest to $\zb ^0$. At the second step, we show that given a sequence of positive real numbers $\delta_k \to 0$ as $k \to \infty$, then we get a corresponding sequence $\{\zb ^{n(\delta_k)} _{\delta_k} \}$ converging as $k \to \infty$.
	
	\textbf{Step 1:} Fix $\delta =0$. It follows that $\rb^n _\delta = \rb^n$, and the sequence $\{ \zb ^n \}$ will not stop, i.e., $n \to \infty$, since the discrepancy principle will not be satisfied by any $n$, in particular $n_\delta \to \infty$ for $\delta \to 0$. Set $n>l>j$, with $n,l,j \in \mathbb{N}$. It holds that
	\begin{align}
	\|\zb^n - \zb^l\|^2 &= 	\|\tilde{\eb}^n - \tilde{\eb}^l\|^2 \nonumber\\
	&=\| \tilde{\eb}^n\|^2 -  \| \tilde{\eb}^l\|^2 - 2 \langle \tilde{\eb}^l, \tilde{\eb}^n - \tilde{\eb}^l \rangle \nonumber\\
	&= \| \tilde{\eb}^n\|^2 -  \| \tilde{\eb}^l\|^2 + 2 \langle \tilde{\eb}^l, \zb^n - \zb^l \rangle \nonumber\\
	&= \| \tilde{\eb}^n\|^2 -  \|\tilde{\eb}^l\|^2 + 2 \sum_{i=l}^{n-1}\langle \tilde{\eb}^l, \hb^i \rangle \nonumber\\
	&= \| \tilde{\eb}^n\|^2 -  \| \tilde{\eb}^l\|^2 + 2 \sum_{i=l}^{n-1}\langle B\tilde{\eb}^l,(CC^* + \alpha_i h\left(CC^*\right))^{-1} \rb^i \rangle \nonumber\\
	&\leq \| \tilde{\eb}^n\|^2 -  \| \tilde{\eb}^l\|^2 + 2 \sum_{i=l}^{n-1} \| B\tilde{\eb}^l\| \|(CC^* + \alpha_i h\left(CC^*\right))^{-1} \rb^i\| \nonumber\\
	&\leq \| \tilde{\eb}^n\|^2 -  \| \tilde{\eb}^l\|^2 + 2(1+\rho) \sum_{i=l}^{n-1} \| \rb^l\| \|(CC^* + \alpha_i h\left(CC^*\right))^{-1} \rb^i\| \label{eq:3.1},
	\end{align}
	where the last inequality comes from \eqref{eq:2c'}. At the same time, we have that
	\begin{align}
	\|(\zb^l - \zb^k)\|^2 &= 	\|(\tilde{\eb}^l - \tilde{\eb}^k)\|^2 \nonumber\\
	&=\| \tilde{\eb}^k\|^2 -  \| \tilde{\eb}^l\|^2 + 2 \langle \tilde{\eb}^l, \tilde{\eb}^k - \tilde{\eb}^l \rangle \nonumber\\
	&= \| \tilde{\eb}^k\|^2 -  \| \tilde{\eb}^l\|^2 - 2 \langle \tilde{\eb}^l, \zb^l - \zb^k \rangle \nonumber\\
	&= \|\tilde{\eb}^k\|^2 -  \| \tilde{\eb}^l\|^2 - 2 \sum_{i=k}^{l-1}\langle \tilde{\eb}^l, \hb^i \rangle \nonumber\\
	&\leq \| \tilde{\eb}^k\|^2 -  \| \tilde{\eb}^l\|^2 + 2 \sum_{i=k}^{l-1} \| B\tilde{\eb}^l\| \|(CC^* + \alpha_i h\left(CC^*\right))^{-1} \rb^i\| \nonumber\\
	&\leq \| \tilde{\eb}^k\|^2 -  \| \tilde{\eb}^l\|^2 + 2(1+\rho) \sum_{i=k}^{l-1} \| \rb^l\| \|(CC^* + \alpha_i h\left(CC^*\right))^{-1} \rb^i\| \label{eq:3.2}.
	\end{align}
	Combining together \eqref{eq:3.1} and \eqref{eq:3.2}, we obtain that
	\begin{align*}
	\|\zb^n - \zb^k\|^2 &\leq 2 	\|\zb^n - \zb^l\|^2 + 	2\|\zb^l - \zb^k\|^2\\
	&\leq 2\| \tilde{\eb}^n\|^2 + 2\| \tilde{\eb}^k\|^2 - 4\| \tilde{\eb}^l\|^2+ 4 (1+\rho) \sum_{i=k}^{n-1} \| \rb^l\| \|(CC^* + \alpha_i h\left(CC^*\right))^{-1} \rb^i\|.
	\end{align*}
	This is valid for every $l \in \{k+1, \cdots, n-1\}$. Choosing $l$ such that $\|\rb^l\|=\min_{i=k+1,\cdots, n-1}\|\rb^i\|$, it follows that
	\begin{align*}
	\|\zb^n - \zb^k\|^2 \leq 2\| \tilde{\eb}^n\|^2 + 2\| \tilde{\eb}^k\|^2 - 4\| \tilde{\eb}^l\|^2+ 4 (1+\rho) \sum_{i=k}^{n-1} \| \rb^i\| \|(CC^* + \alpha_i h\left(CC^*\right))^{-1} \rb^i\|.
	\end{align*}
	
	From Proposition \ref{prop:Alg4_rev_2}, $\{\|\tilde{\eb}^j\|^2\}_{j\in\mathbb{N}}$ is a converging sequence, and from Corollary \ref{cor:Alg4_rev_1} 
	$$
	\sum_{i=k}^{n-1} \| \rb^l\| \|(CC^* + \alpha_i h\left(CC^*\right))^{-1} \rb^i\| \to 0 \qquad \mbox{as }k, n \to \infty
	$$
	since it is the tail of a converging series. Therefore, 
	$$
	\|\zb^n - \zb^k\|^2 \to 0 \qquad \mbox{as }k, n \to \infty
	$$
	and $\{ \zb^n\}_{n\in\mathbb{N}}$ is a Cauchy sequence, and then convergent.
	
	\textbf{Step 2:} Let $\xb$ be the converging point of the sequence $\{ \zb ^n \}_{n\in\mathbb{N}}$ and let $\delta_k >0$ be a sequence of positive real numbers converging to $0$. For every $\delta_k$, let $n = n(\delta_k)$ be the first positive integer such that \eqref{eq:Alg4_rev_12} is satisfied, whose existence is granted by Corollary \ref{cor:Alg4_rev_1}, and let $\{ \zb^{n(\delta_k)} _{\delta_k} \}$ be the corresponding sequence. For every fixed $\epsilon >0$, there exists $\overline{n} = \overline{n}(\epsilon)$ such that 
	\begin{equation}\label{eq:Alg4_rev_13}
	\| \xb - \zb ^n \| \leq \epsilon /2 \qquad \mbox{for every } n > \overline{n}(\epsilon),
	\end{equation} 
	and there exists $\overline{\delta}= \overline{\delta}(\epsilon)$ for which
	\begin{equation}\label{eq:Alg4_rev_14}
	\| \zb ^{\overline{n}} - \zb^{\overline{n}} _\delta \| \leq \epsilon/2 \qquad \mbox{for every } 0 < \delta < \overline{\delta},
	\end{equation}
	due to the continuity of the operator $\gb \mapsto \zb^n$ for every fixed $n$. Therefore, let us choose $\overline{k}= \overline{k}(\epsilon)$ large enough such that $\delta_k < \overline{\delta}$ and such that $n(\delta_k) > \overline{n}$ for every $k > \overline{k}$. Such $\overline{k}$ does exists since $\delta_k \to 0$ and $n_\delta \to \infty$ for $\delta \to 0$. Hence, for every $k > \overline{k}$, we have
	\begin{align*}
	\| \xb - \zb^{n(\delta_k)} _{\delta_k} \| &= \| \tilde{\eb} ^{n(\delta_k)} _{\delta_k} \| \\
	&\leq  \| \tilde{\eb} ^{\overline{n}} _{\delta_k} \| \\
	&= \| \xb - \zb^{\overline{n}} _{\delta_k} \| \\
	&\leq \| \xb - \zb^{\overline{n}} \|  +  \| \zb^{\overline{n}} - \zb^{\overline{n}} _{\delta_k} \| \leq \epsilon,
	\end{align*}
	where the first inequality comes from Proposition \ref{prop:Alg4_rev_2} and the last one from \eqref{eq:Alg4_rev_13} and \eqref{eq:Alg4_rev_14}.
\end{proof}

\end{document}